\DeclareSymbolFont{symbolsC}{U}{pxsyc}{m}{n}
\DeclareMathSymbol{\coloneqq}{\mathrel}{symbolsC}{"42}
\def\R{{\mathbb {R}}}
\def\N{{\mathbb {N}}}
\def\A{{\mathcal {A}}}
\def\B{{\mathcal {B}}}
\def\B{{\mathcal {B}}}
\def\X{{\mathcal {X}}}
\def\M{{\mathcal {M}}}
\def\n{{\mathbf {n}}}
\def\d{\operatorname{\text{dist}}}
\newcommand{\p}{\partial}
\def\eps{{\varepsilon}}
\newtheorem{teo}{Theorem}[section]
\newtheorem{lema}[teo]{Lemma}
\newtheorem{prop}[teo]{Proposition}
\newtheorem{corol}[teo]{Corollary}
\theoremstyle{remark}
\newtheorem{remark}[teo]{Remark}
\theoremstyle{definition}
\numberwithin{equation}{section}
\begin{document}

\title[Gamma convergence]{A Gamma convergence approach to the critical Sobolev embedding in variable exponent spaces}

\author[J. Fern\'andez Bonder, N. Saintier and A. Silva]{Juli\'an Fern\'andez Bonder, Nicolas Saintier and Analia Silva}

\address[J. Fern\'andez Bonder and A. Silva]{IMAS - CONICET and Departamento de Matem\'atica, FCEyN - Universidad de Buenos Aires, Ciudad Universitaria, Pabell\'on I  (1428) Buenos Aires, Argentina.}

\address[N. Saintier]{Instituto de Ciencias - Univ. Nac. Gral Sarmiento, J. M. Gutierrez 1150, C.P. 1613 Los Polvorines - Pcia de Bs. As. - Argentina and Dpto Matem\'atica, FCEyN - Univ. de Buenos Aires, Ciudad Universitaria, Pabell\'on I  (1428) Buenos Aires, Argentina.}

\email[J. Fernandez Bonder]{jfbonder@dm.uba.ar}

\urladdr[J. Fernandez Bonder]{http://mate.dm.uba.ar/~jfbonder}

\email[A. Silva]{asilva@dm.uba.ar}

\email{nsaintie@dm.uba.ar, nsaintie@ungs.edu.ar}

\urladdr[N. Saintier]{http://mate.dm.uba.ar/~nsaintie}

\subjclass[2000]{46E35,35B33}

\keywords{Sobolev embedding, variable exponents, critical exponents, concentration compactness}

\begin{abstract}
In this paper, we study the critical Sobolev embeddings $W^{1,p(x)}(\Omega)\subset L^{p^*(x)}(\Omega)$ for variable exponent Sobolev spaces from the point of view of the $\Gamma$-convergence. More precisely we determine the $\Gamma$-limit of subcritical approximation of the best constant associated with this embedding. As an application we provide a sufficient condition for the existence of extremals for the best constant. 
\end{abstract}

\maketitle

\section{Introduction}

The purpose of this paper is to analyze the Sobolev immersion theorem for variable exponent spaces in the critical range from the point of view of the $\Gamma$-convergence. Our motivation comes from the existence problem for extremals of these immersions. By extremals we mean functions $u\in W^{1,p(x)}_0(\Omega)$ where the  infimum 
\begin{equation}\label{S}
S = S(p(\cdot), q(\cdot),\Omega) \coloneqq  \inf_{v\in W^{1,p(x)}_0(\Omega)} \frac{\| \nabla v \|_{p(x)}}{\|v\|_{q(x)}}
\end{equation}
is attained. We refer to the next section for the definition of the variable exponent Sobolev spaces and the norms appearing in (\ref{S}). We shall assume  the set $\A\coloneqq \{ x\in\Omega\colon q(x) = p^*(x)\}$  non-empty (here $p^*(x)$ is the Sobolev conjugate of $p(x)$, see next section) so that the problem of existence of an extremal for $S$ is {\it critical} from the Sobolev embedding point of view. 

This problem was recently treated in \cite{FBSS1} where the authors provide sufficient conditions to ensure the existence of such extremals. The approach in \cite{FBSS1} was the so-called {\em direct method of the calculus of variations}. That is, they considered a minimizing sequence for $S$ and find a sufficient condition that ensured the compactness of such sequence.

In this paper, we follow a different approach. Instead of looking for minimizing sequences for $S$, we approximate the critical problems by subcritical ones, where the existence of extremals is easily obtained, and then pass to the limit. In fact, following G. Palatucci in \cite{Palatucci} where  the constant exponent case is studied, we want  to determine the asymptotic behaviour in the sense of the $\Gamma$-convergence of the subcritical approximations 
$$ 
S_\eps \coloneqq  \inf_{v\in W^{1,p(x)}_0(\Omega)} \frac{\| \nabla v \|_{p(x)}}{\|v\|_{q(x)-\eps}},\quad \eps>0,
$$
and then deduce the behavior of their associated extremals $u_\eps$.

\subsection{Preliminary notations} 

Let $\Omega$ be  smooth open bounded subset of $\R^n$. Given a measurable function $p\colon \Omega\to [1,+\infty)$, the Lebesgue variable exponent space $L^{p(x)}(\Omega)$ is defined as
$$ 
L^{p(x)}(\Omega) \coloneqq  \left\{ u\in L^1_{\text{loc}}(\Omega)\colon \int_\Omega |u|^{p(x)}\, dx < +\infty\right\}. 
$$
This space is endowed with the norm
$$ 
\|u\|_{p(x)} \coloneqq  \inf\left\{\lambda>0\colon \int_\Omega \Big|\frac{u}{\lambda}\Big|^{p(x)}\, dx \le 1 \right\}. 
$$
which turns $L^{p(x)}(\Omega)$  into a Banach space. Assuming moreover that 
\begin{equation}\label{p+-}
1< p_- \coloneqq  \inf_\Omega p \le p_+ \coloneqq \sup_\Omega p <+\infty, 
\end{equation}
it can be proved that $L^{p(x)}(\Omega)$ separable and reflexive. 

\medskip

These spaces where first considered in the seminal W. Orlicz' paper \cite{Orlicz} in 1931 but then where left behind as the author pursued the study of the spaces that now bear his name. The first systematic study of these spaces appeared in H. Nakano's  works at the beginning of the 1950s \cite{Nakano1, Nakano2} where he developed a general theory in which the spaces $L^{p(x)}(\Omega)$ were a particular example of the more general spaces he was considering. Even though some progress was made after Makano's work (see in particular the works of the Polish school H. Hudzik, A. Kami\'nska and J. Musielak in e.g. \cite{Hudzik, Kaminska, Musielak}), it was only in the last 20 years that major progress has been accomplished mainly due to the following facts:
\begin{itemize}
\item The discovery of a very weak condition ensuring the boundedness of the Hardy-Littlewood maximal operator in these spaces, i.e. the log-H\"older condition that implies, to begin with, that test functions are dense in $L^{p(x)}(\Omega)$.

\item The discovery of the connection of these spaces with the modeling of the so-called {\em electrorheological fluids} \cite{Rusika}

\item The application that variable exponents have shown in image processing \cite{CLR}
\end{itemize}

\medskip

Of central importance in the above mentioned applications are the variable exponent Sobolev spaces $W^{1,p(x)}(\Omega) $   defined as
$$ 
W^{1,p(x)}(\Omega) \coloneqq  \left\{ u\in W^{1,1}_{\text{loc}}(\Omega)\colon u, \partial_i u\in L^{p(x)}(\Omega)\ i=1,\dots,n\right\}, 
$$
and the subspace of functions with zero boundary values
$$ 
W^{1,p(x)}_0(\Omega) \coloneqq \overline{\{u\in W^{1,p(x)}(\Omega)\colon u \text{ has compact support}\}}, 
$$
where the closure is taken in the $W^{1,p(x)}(\Omega)-$norm $\|\cdot\|_{1,p(x)}$  that is defined as
$$ 
\|u\|_{1,p(x)} \coloneqq \|u\|_{p(x)} + \| \nabla u \|_{p(x)}. 
$$

We assume from now on that $p$ is log-H\"older in the sense that 
\begin{equation}\label{log}
\sup_{x,y\in \Omega} |(p(x)-p(y)) \log(|x-y|)| <+\infty. 
\end{equation} 
Under this assumption it can be proved  that the space $C^\infty_c(\Omega)$ is dense in $L^{p(x)}(\Omega)$  and in $W^{1,p(x)}_0(\Omega)$, and also that the Poincar\'e inequality holds i.e. there exists a constant $C=C(\Omega,p)>0$ such that
$$
\|u\|_{p(x)} \le C \| \nabla u \|_{p(x)}
$$
for any $u\in W^{1,p(x)}_0(\Omega)$. It follows in particular that $\|\nabla u\|_{p(x)}$ is an equivalent norm in $W^{1,p(x)}_0(\Omega)$.

\subsection{Critical Sobolev embedding}

A major tool in order to study existence and regularity properties of solutions to partial differential equatios is the Sobolev embedding theorem. For variable exponents spaces this theorem has been established in \cite{Ko-Ra} (see also \cite{Fan}). Given a measurable function $q\colon \Omega\to [1,+\infty)$, it basically says that we have a continuous embedding 
$$ 
W^{1,p(x)}_0(\Omega)\subset L^{q(x)}(\Omega) 
$$
if and only if $q(x)\le p^*(x) \coloneqq  np(x)/(n-p(x))$. Moreover, when the exponent $q$ is {\em strictly subcritical} in the sense that 
$$ 
\inf_{x\in \Omega} (p^*(x)-q(x))>0, 
$$
then this embedding is compact (see e.g. \cite{Diening}). On the other hand when the {\it critical set} 
\begin{equation}\label{CriticalSet}
\A \coloneqq  \{x\in \bar\Omega\colon q(x)=p(x)^*\}
\end{equation} 
is  not empty, the immersion is no longer compact in general (see \cite{MOSS} for some very restricted cases where $\A\neq\emptyset$ but the immersion still remains compact). The existence of extremal for the best constant $S$ defined in (\ref{S}) is then not granted. Indeed the well-known Pohozaev identity implies that  when $p$ is constant and $\Omega $ is star-shaped then $S$ is not attained. 

The problem of existence of extremals for $S$ in the variable exponent setting was recently considered in \cite{FBSS1} where the authors provided sufficient existence conditions. A fundamental tool used in their proof, as well as in almost every problem dealing with critical expoenent in general, is the so called {\em Concentration Compactness Principle} (CCP) that was introduced by P. L. Lions in the 80's (see \cite{Lions}) and was recently extended to the variable exponent setting in \cite{FBS} (see also the refinement in \cite{FBSS1}). This version of the CCP relies on a notion of localized Sobolev constant defined as follows. 
For $x\in\A$ we define the localized best Sobolev constant $\bar S_x$ as
\begin{equation}\label{defSobLoc}
\bar S_x \coloneqq \lim_{\eps\to 0} S(p(\cdot),q(\cdot),B_\eps(x)\cap\Omega). 
\end{equation}
Notice that
\begin{equation}\label{Sp<S}
S(p(\cdot), q(\cdot), \Omega) \le \inf_{x\in \A} \bar S_x . 
\end{equation}

The CCP proved in \cite{FBS} and refined in \cite{FBSS1} states that given a weakly convergent sequence $\{u_k\}_{k\in\N}\subset W^{1,p(x)}_0(\Omega)$ with weak limit $u$, there exists a countable set of indices $I$, positive real numbers $\{\mu_i\}_{i\in I}, \{\nu_i\}_{i\in I}\subset \R_+$, points $\{x_i\}_{i\in I}\in \A$ and nonnegative measures $\mu$, $\nu$ such that
\begin{align} 
|u_k|^{q(x)}\, dx &\stackrel{*}{\rightharpoonup} d\nu = |u|^{q(x)}\, dx + \sum_{i\in I} \nu_i\,  \delta_{x_i}, \label{CCP}\\
|\nabla u_k|^{p(x)}\, dx &\stackrel{*}{\rightharpoonup} d\mu \ge |\nabla u|^{p(x)}\, dx + \sum_{i\in I} \mu_i\,  \delta_{x_i},\label{CCP2}\\
 \bar S_{x_i} \nu_i^{\frac{1}{p(x_i)^*}} &\le \nu_i^{\frac{1}{p(x_i)}} \quad \text{ for any } i\in I. \label{CCP3}
\end{align}
It is also easily checked that the nonnegative measure
$$ 
\tilde\mu\coloneqq  \mu - \left(|\nabla u|^{p(x)}\, dx + \sum_{i\in I} \mu_i\,  \delta_{x_i}\right) 
$$
has no atoms.

By analyzing the behavior of minimizing sequences using the CCP, it is proved in \cite{FBSS1} that if the inequality in \eqref{Sp<S} is strict, $q_-<p_+$ and $p,q$ are slightly more regular than merely Log-H\"older continuous,  more precisely if
\begin{equation}\label{logholder+}
\lim_{\Omega\ni y\to x} (p(y)-p(x)) \log(|x-y|) = \lim_{\Omega\ni y\to x} (q(y)-q(x)) \log(|x-y|) = 0, \text{ uniformly in } x\in \Omega.
\end{equation}
is satisfied, then there exists an extremal for $S(p(\cdot), q(\cdot), \Omega)$, i.e. a function $u\in W^{1,p(x)}_0(\Omega)$ where the infimum in \eqref{S} is attained.

\subsection{Statements of the results}

The main purpose of this paper is to study the limit as $\eps\to 0$ of the subcritical approximation 
$$ 
S_\eps \coloneqq  S(p(\cdot), q(\cdot)-\eps, \Omega) = \inf_{v\in W^{1,p(x)}_0(\Omega)} \frac{\| \nabla v \|_{p(x)}}{\|v\|_{q(x)-\eps}},\quad \eps>0,
$$
of  $S(p(\cdot), q(\cdot), \Omega)$ following the work of  \cite{Amar-Garroni} and \cite {Palatucci}.  This amounts to study the asymptotic behaviour as $\eps\to 0$ of the functional $F_\eps: \B(\Omega)\to \R$ defined by
$$ 
F_\varepsilon(u) \coloneqq  \int_\Omega|u|^{q(x)-\varepsilon}\, dx, 
$$
where 
\begin{equation}\label{DefB}  
\B(\Omega) \coloneqq  \Big\{u\in W^{1,p(x)}_0(\Omega),\, \|\nabla u\|_{p(x),\Omega}\le 1\Big\}. 
\end{equation} 
This  will be done in the framework of the $\Gamma$-convergence. 

In view of the CCP (\ref{CCP})-(\ref{CCP3}), it turns out  to be convenient to extend $F_\eps$ to the space 
\begin{align*}
\X=\X(\Omega)=\Big\{(u,\mu)\in W_0^{1,p(x)}(\Omega)\times \M(\overline{\Omega})\colon  \mu(\overline{\Omega})\le 1,\ \mu = |\nabla u|^{p(x)}\, dx + \tilde\mu + \sum_{i\in I} \mu_i \delta_{x_i}\Big\}, 
\end{align*}
where $\M(\overline{\Omega})$ is the space of bounded measures over $\overline{\Omega}$ and, in the decomposition of $\mu$, $\tilde\mu$ is a nonnegative measure without atoms, the set $I$ is at most countable, the scalars $\mu_i$ are positive, and the atoms $x_i$ belongs to the critical set $\A$ defined in \eqref{CriticalSet}.
 
We say that a sequence $\{(u_\varepsilon,\mu_\varepsilon)\}_{\eps>0}\subset \X$ converges in $\X$ to $(u,\mu)$, which is denoted by $(u_\eps, \mu_\eps)\stackrel{\tau}{\to} (u,\mu)$, if  $u_\varepsilon\rightharpoonup u$ weakly in $L^{q(x)}(\Omega)$ and $\mu_\varepsilon\stackrel{*}{\rightharpoonup} \mu$ in $\mathcal{M}(\overline{\Omega})$. We recall that $\mu_\varepsilon\stackrel{*}{\rightharpoonup} \mu$ means that $\int\phi\,d\mu_\eps\to\int\phi\,d\mu$ for any $\phi\in C_b(\overline{\Omega})$. 

We then extend $F_\varepsilon$ to the whole space $\X$ by
$$ 
F_\varepsilon(u,\mu) = \begin{cases} 
\int_\Omega|u|^{q(x)-\varepsilon}\,dx  \quad \text{ if } d\mu=|\nabla u|^{p(x)}\,dx \\
0 \hspace{2.5cm}\mbox{ otherwhise in } \X
\end{cases}
$$

We also consider the limit functional $F^*\colon \X\to \R$ defined by
$$
F^*(u,\mu) \coloneqq  \int_\Omega|u|^{q(x)}\,dx+\sum_{i\in I}\mu_i^\frac{p^*(x_i)}{p(x_i)}\bar S^{-p^*(x_i)}_{x_i}.
$$
where $\bar S_{x_i}$, $i\in I$, is defined in (\ref{defSobLoc}). 

\medskip

Our main result is the following

\begin{teo}\label{TEO}
The functionals $\{F_\eps\}_{\eps>0}$ $\Gamma-$converge to $F^*$  in the sense that for any $(u,\mu)\in \X$ there holds that 
\begin{itemize}
\item for every  sequence $\{(u_\eps,\mu_\eps)\}_{\eps>0}\subset \X$ converging to $(u,\mu)$ in $\X$, we have
\begin{equation}\label{Limsup}
\limsup_{\eps\to 0} F_\eps(u_\eps,\mu_\eps) \le F^*(u,\mu)
\end{equation}
\item there exists a sequence $\{(u_\eps,\mu_\eps)\}_{\eps>0}\subset \X$ converging to $(u,\mu)$ in $\X$ such that
\begin{equation}\label{Liminf}
\liminf_{\eps\to 0} F_\eps(u_\eps,\mu_\eps) \ge F^*(u,\mu)
\end{equation}
\end{itemize}
\end{teo}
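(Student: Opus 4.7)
I will follow the strategy of Palatucci's proof in the constant-exponent case, adapted by means of the variable-exponent concentration-compactness principle (\ref{CCP})--(\ref{CCP3}). The two halves of the statement are proved separately: the limsup inequality is a direct consequence of the CCP, while the liminf inequality requires an explicit recovery sequence.

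\textbf{Limsup inequality.} Let $(u_\varepsilon,\mu_\varepsilon)\stackrel{\tau}{\to}(u,\mu)$. If $d\mu_\varepsilon\neq |\nabla u_\varepsilon|^{p(x)}dx$ along a subsequence, $F_\varepsilon=0$ there and the inequality is trivial, so I assume $d\mu_\varepsilon=|\nabla u_\varepsilon|^{p(x)}dx$. Then $\mu_\varepsilon(\overline\Omega)\le 1$ gives $\{u_\varepsilon\}$ bounded in $W_0^{1,p(x)}(\Omega)$, and the CCP produces along a subsequence atoms $\{x_i\}_{i\in I}\subset\A$ and weights $\nu_i,\mu_i$ satisfying (\ref{CCP})--(\ref{CCP3}); since $\tilde\mu$ is atom-free, these atoms and weights coincide with (a subset of) those in the prescribed decomposition of $\mu$. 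In view of (\ref{CCP3}) it then suffices to prove
$$
\limsup_{\varepsilon\to 0}\int_\Omega|u_\varepsilon|^{q(x)-\varepsilon}dx\le \int_\Omega|u|^{q(x)}dx + \sum_{i\in I}\nu_i.
$$
To this end I fix $\phi_\rho\in C_c(\Omega)$, $0\le\phi_\rho\le 1$, equal to $1$ on a neighborhood of $\{x_i\}$ and supported in a slightly larger one. Outside $\mathrm{supp}\,\phi_\rho$ the embedding $W_0^{1,p(x)}\subset L^{q(x)}$ is compact, so Vitali's convergence theorem combined with the pointwise bound $|u_\varepsilon|^{q(x)-\varepsilon}\le 1+|u_\varepsilon|^{q(x)}$ yields $\int(1-\phi_\rho)|u_\varepsilon|^{q(x)-\varepsilon}dx\to\int(1-\phi_\rho)|u|^{q(x)}dx$. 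On $\mathrm{supp}\,\phi_\rho$ the same bound and (\ref{CCP}) give $\int\phi_\rho|u_\varepsilon|^{q(x)-\varepsilon}dx\le \int\phi_\rho\,dx+\int\phi_\rho|u|^{q(x)}dx+\sum_i\nu_i+o_\varepsilon(1)$. Adding both contributions and letting $\rho\to 0$ concludes.

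\textbf{Liminf inequality.} I construct the recovery sequence by superimposing on $u$ a concentrating bubble at each atom and a diffuse perturbation reproducing the non-atomic singular part $\tilde\mu$. For each atom $x_i$, the definition (\ref{defSobLoc}) of $\bar S_{x_i}$ as a limit over shrinking balls allows (via a diagonal extraction of near-extremals) the choice of $v_{i,\varepsilon}\in W_0^{1,p(x)}(B_{\rho_\varepsilon}(x_i)\cap\Omega)$ with $\rho_\varepsilon\to 0$, normalized so that $\int|\nabla v_{i,\varepsilon}|^{p(x)}dx\to\mu_i$; the concentration at $x_i$, where $q(x_i)=p^*(x_i)$, together with the refined log-H\"older condition (\ref{logholder+}), gives $\int|v_{i,\varepsilon}|^{q(x)-\varepsilon}dx\to \bar S_{x_i}^{-p^*(x_i)}\mu_i^{p^*(x_i)/p(x_i)}$. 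The diffuse part $\tilde\mu$ is reproduced by a standard oscillatory construction $w_\varepsilon$ with $|\nabla w_\varepsilon|^{p(x)}dx\stackrel{*}{\rightharpoonup}\tilde\mu$ and $w_\varepsilon\to 0$ in $L^{q(x)}$. Setting $u_\varepsilon=u+\sum_i v_{i,\varepsilon}+w_\varepsilon$ and $\mu_\varepsilon=|\nabla u_\varepsilon|^{p(x)}dx$ (renormalized if necessary so that $\mu_\varepsilon(\overline\Omega)\le 1$), the asymptotic disjointness of the supports ensures $(u_\varepsilon,\mu_\varepsilon)\stackrel{\tau}{\to}(u,\mu)$ and $F_\varepsilon(u_\varepsilon,\mu_\varepsilon)\to F^*(u,\mu)$.

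\textbf{Main obstacle.} The delicate step is the bubble construction, specifically the proof that $\int|v_{i,\varepsilon}|^{q(x)-\varepsilon}dx$ attains the exact value dictated by $\bar S_{x_i}$. The difficulty is that $\bar S_{x_i}$ is defined through the Luxemburg norm $\|\cdot\|_{q(x)}$ whereas $F_\varepsilon$ is modular-based, and the conversion between the two for sequences concentrating at $x_i$ relies on the sharper log-H\"older assumption (\ref{logholder+}) to control the oscillation of both $p$ and $q$ near $x_i$; without (\ref{logholder+}) one cannot identify the exponent $p^*(x_i)/p(x_i)$ appearing in $F^*$.
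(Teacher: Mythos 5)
Your overall strategy (CCP for the $\limsup$, bubbles plus a diffuse perturbation for the recovery sequence) matches the paper's, but as written the proposal has two genuine gaps. The first is in the $\liminf$ construction. You set $u_\eps=u+\sum_i v_{i,\eps}+w_\eps$ and invoke ``asymptotic disjointness of the supports'' to conclude both that $\mu_\eps\stackrel{*}{\rightharpoonup}\mu$ with $\mu_\eps(\overline\Omega)\le1$ and that $F_\eps(u_\eps,\mu_\eps)\to F^*(u,\mu)$. But the supports are \emph{not} disjoint in general: an atom $x_i$ may lie inside $\operatorname{supp}u$ and inside $\operatorname{supp}\tilde\mu$, so every $v_{i,\eps}$ overlaps $u$ and $w_\eps$. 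One must then control the cross terms in $|\nabla(u+\sum_i v_{i,\eps}+w_\eps)|^{p(x)}$ (to keep the modular $\le1$ and to identify the weak-$*$ limit of $\mu_\eps$) and in $|u_\eps|^{q(x)-\eps}$ (to show no mass is lost, since by the $\limsup$ half the limit cannot exceed $F^*$, so any loss is fatal). The paper handles exactly this point by an elementary inequality (Proposition \ref{desigualdad.nico}) for the gradient cross terms, and, crucially, by a reduction lemma of Amar--Garroni (Lemma \ref{lema.ppal}) showing it suffices to treat data with $\mu(\overline\Omega)<1$, finitely many atoms, and the atomic part at positive distance from $\operatorname{supp}(|u|+\tilde\mu)$ --- after which the superposition really is disjoint. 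Either that reduction or an explicit Brezis--Lieb decoupling is needed; your proposal supplies neither. Relatedly, you flag the norm-vs-modular conversion at the atoms as ``the main obstacle'' but do not resolve it, and you appeal to the stronger hypothesis \eqref{logholder+}, which is not assumed in Theorem \ref{TEO}; the paper resolves this purely from continuity of $q$ via Lemma \ref{SobLoc2}, which identifies $\widetilde S_{x_0}^{-1}=\bar S_{x_0}^{-q(x_0)}$.

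The second gap is smaller but real: in the $\limsup$ part you justify $\int(1-\phi_\rho)|u_\eps|^{q(x)-\eps}\,dx\to\int(1-\phi_\rho)|u|^{q(x)}\,dx$ by saying the embedding is compact outside $\operatorname{supp}\phi_\rho$. This is false in general, since the critical set $\A=\{q=p^*\}$ need not be contained in the (countable) atom set $\{x_i\}$, so $q$ is not uniformly subcritical off $\operatorname{supp}\phi_\rho$. What is true is that \emph{this particular sequence} does not concentrate there, because by \eqref{CCP} the limit measure $\nu$ has no atoms outside $\{x_i\}$; combining this with a.e.\ convergence one recovers local strong $L^{q(x)}$ convergence, but that argument must go through the CCP, not through compactness of the embedding. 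The paper sidesteps the localization entirely with a global variable-exponent H\"older estimate $\int|u_\eps|^{q(x)-\eps}\le(1+o(1))\,\||u_\eps|^{q(x)-\eps}\|_{q(x)/(q(x)-\eps)}\,\|1\|_{q(x)/\eps}$ followed by the modular--norm comparison, which is both shorter and avoids the compactness issue altogether.
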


\begin{remark} What we called $\Gamma-$convergence in Theorem \ref{TEO} is what other authors called $\Gamma^+-$convergence. See \cite{Palatucci}.
\end{remark}

\begin{remark}
Consider the functionals $F^+,F^-\colon \X\to [0,+\infty)$ defined by
$$
F^+(u,\mu) \coloneqq \sup\,\{\limsup F_\varepsilon(u_\varepsilon,\mu_\varepsilon)\colon (u_\varepsilon,\mu_\varepsilon)\stackrel{\tau}{\to} (u,\mu)\},
$$
and
\begin{equation}\label{defF-}
F^-(u,\mu) \coloneqq \sup\,\{\liminf F_\varepsilon(u_\varepsilon,\mu_\varepsilon)\colon (u_\varepsilon,\mu_\varepsilon)\stackrel{\tau}{\to} (u,\mu)\}.
\end{equation} 
Obviously, $F^-\le F^+$. Moreover the $\Gamma$-limit $F^*$ of the sequence $\{F_\eps\}_{\eps>0}$ exists if and only if $F^-=F^+$ and in this case, $F^*=F^-=F^+$. Therefore, we can rewrite the previous theorem as
$$ 
F^+\le F^* \quad \text{ and } \quad  F^-\ge F^*. 
$$
\end{remark}

Define 
\begin{equation}\label{defTildeS}
 \tilde S^{-1} \coloneqq  \tilde S(p(\cdot), q(\cdot),\Omega)= \sup_{u\in\B(\Omega)}\int_\Omega |u|^{q(x)}\,dx,
\end{equation} 
where $\B(\Omega)$ is defined in (\ref{DefB}). We also define a local best constant $\tilde S^{-1}_{x_0}$, $x_0\in\A$, in a smiliar way as in (\ref{defSobLoc}) by 
\begin{equation}\label{LocSob}
\widetilde S_{x_0}^{-1} \coloneqq \lim_{\varepsilon\to0}\left(\sup_{u\in \B(B_\eps(x_0))}\int_{B_{\varepsilon}(x_0)}|u|^{q(x)}\,dx\right), \qquad x_0\in\A.
\end{equation} 
Noticing that $\B(B_{x_0}(\eps))\subset \B(\Omega)$, we have  that 
\begin{equation}\label{Cont}
 \sup_{x_0\in\A} \widetilde S_{x_0}^{-1} \le \widetilde S^{-1}. 
\end{equation} 
We also prove in lemma \ref{SobLoc2} below that $\widetilde S_{x_0}^{-1}=\bar S_{x_0}^{-q(x_0)}$. 

We now consider the subcritical approximations $\tilde S_\eps^{-1}$ of $\tilde S^{-1}$ defined by 
$$ 
\tilde S_\eps^{-1}\coloneqq \tilde S(p(\cdot), q(\cdot)-\eps, \Omega)^{-1} = \sup_{u\in \B(\Omega)}\int_\Omega |u|^{q(x)-\eps}\,dx. 
$$ 
We first prove that 

\begin{prop} \label{SeS} There holds that 
$$ 
\lim_{\eps\to 0} \tilde S_\eps^{-1} = \tilde S^{-1}. 
$$
\end{prop}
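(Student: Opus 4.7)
The plan is to establish the limit by proving the two one-sided inequalities separately, after which the conclusion follows immediately.

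For the lower bound $\liminf_{\eps\to 0}\tilde S_\eps^{-1}\ge\tilde S^{-1}$, I would argue pointwise in $u$. Fix an arbitrary $u\in\B(\Omega)$; the Sobolev embedding $W^{1,p(x)}_0(\Omega)\hookrightarrow L^{q(x)}(\Omega)$ guarantees $\int_\Omega|u|^{q(x)}\,dx<\infty$, and splitting the domain according to whether $|u|<1$ or $|u|\ge 1$ yields the pointwise bound
\[
|u|^{q(x)-\eps}\le |u|^{q(x)}+\mathbf{1}_{\{|u|<1\}},
\]
which provides an integrable majorant independent of $\eps$. Dominated convergence then gives $\int_\Omega|u|^{q(x)-\eps}\,dx\to\int_\Omega|u|^{q(x)}\,dx$. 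Since $\tilde S_\eps^{-1}\ge\int_\Omega|u|^{q(x)-\eps}\,dx$ by definition, passing to the liminf and then taking the supremum over $u\in\B(\Omega)$ yields the desired inequality.

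For the upper bound $\limsup_{\eps\to 0}\tilde S_\eps^{-1}\le\tilde S^{-1}$, I would introduce a truncation level $\delta\in(0,1)$ and split the domain according to the size of $|u|$. For any $u\in\B(\Omega)$ and any $\eps<q_-$: on $\{|u|\ge\delta\}$ one has $|u|^{-\eps}\le\delta^{-\eps}$ and therefore $|u|^{q(x)-\eps}\le\delta^{-\eps}|u|^{q(x)}$; on $\{|u|<\delta\}$, since $\delta<1$ and $q(x)-\eps\ge q_--\eps>0$, one has $|u|^{q(x)-\eps}\le\delta^{q(x)-\eps}\le\delta^{q_--\eps}$. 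Integrating and summing the two contributions gives
\[
\int_\Omega|u|^{q(x)-\eps}\,dx\le\delta^{-\eps}\int_\Omega|u|^{q(x)}\,dx+\delta^{q_--\eps}|\Omega|.
\]
Since the estimate is uniform in $u\in\B(\Omega)$, taking the supremum yields $\tilde S_\eps^{-1}\le\delta^{-\eps}\tilde S^{-1}+\delta^{q_--\eps}|\Omega|$. Letting $\eps\to 0$ gives $\limsup_{\eps\to 0}\tilde S_\eps^{-1}\le\tilde S^{-1}+\delta^{q_-}|\Omega|$, and sending $\delta\to 0$ concludes the proof.

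The only delicate point is the upper bound, since the pointwise convergence $|u|^{q(x)-\eps}\to|u|^{q(x)}$ is not uniform over $\B(\Omega)$, so one cannot simply commute supremum and limit. The threshold-splitting argument resolves this: for fixed $\delta$ the factor $\delta^{-\eps}$ tends to $1$ as $\eps\to 0$, while the small-value contribution is dominated by $\delta^{q_-}|\Omega|$, which vanishes as $\delta\to 0$. Nothing beyond the basic Sobolev embedding and the finiteness of $|\Omega|$ enters the argument.
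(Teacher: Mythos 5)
Your proof is correct. The lower bound is argued exactly as in the paper: dominated convergence for each fixed $u\in\B(\Omega)$ with the majorant $|u|^{q(x)}+\mathbf{1}_{\{|u|<1\}}$, followed by a supremum over $u$ (the paper phrases it via a near-extremal $u_\delta$, which is the same thing). The upper bound, however, is where you genuinely diverge. The paper invokes its variable-exponent H\"older inequality (Lemma \ref{Holder}) with the exponent pair $\frac{q(x)}{q(x)-\eps}$ and $\frac{q(x)}{\eps}$, which yields
$$\int_\Omega|u|^{q(x)-\eps}\,dx\le (1+o(1))\Bigl(\int_\Omega|u|^{q(x)}\,dx\Bigr)^{1+o(1)},$$
uniformly over $\B(\Omega)$, and then passes to the supremum; this requires the auxiliary lemma and the computation that $\|1\|_{q(x)/\eps}\to 1$. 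Your truncation at level $\delta$ replaces all of that with the elementary pointwise bound $|u|^{q(x)-\eps}\le\delta^{-\eps}|u|^{q(x)}+\delta^{q_--\eps}$, giving $\tilde S_\eps^{-1}\le\delta^{-\eps}\tilde S^{-1}+\delta^{q_--\eps}|\Omega|$ uniformly in $u$, and the double limit $\eps\to0$, $\delta\to0$ finishes. Both arguments use only that $\Omega$ is bounded and that $\tilde S^{-1}<\infty$ (the Sobolev embedding); yours is self-contained and avoids the H\"older machinery altogether, at the cost of an extra parameter $\delta$, while the paper's version reuses a lemma it needs elsewhere and makes the uniform multiplicative error $(1+o(1))$ explicit. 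Either route is perfectly adequate here.
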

In the same spirit as in \cite[Theorem 4.2]{FBSS1}, we can deduce from the $\Gamma$-convergence of $F_\eps$ to $F$ the asymptotic  behaviour of extremals for $\tilde S_\eps$: 

\begin{teo}\label{u_eps}
Let $u_\eps\in \B(\Omega)$ be an extremal for $\tilde S_\eps^{-1}$, i.e.
$$ 
\int_\Omega |u_\eps|^{q(x)-\eps}\,dx = \tilde S_\eps^{-1}. 
$$
Then the following alternative hols:
\begin{enumerate}
\item either the sequence $\{u_\eps\}_{\eps>0}$ has a strongly convergent subsequence in $L^{q(x)}(\Omega)$ and the strong limit is an extremal for $\tilde S^{-1}$, 

\item or the sequence $\{u_\eps\}_{\eps>0}$ concentrates around a single point $x_0\in\mathcal{A}$ in the sense that 
$$
|u_\eps|^{q(x)}\, dx \rightharpoonup  \tilde S^{-1}\, \delta_{x_0} \quad \text{ and }\quad 
|\nabla u_\eps|^{p(x)}\, dx \rightharpoonup \delta_{x_0}.
$$
Moreover
\begin{equation}\label{Conc} 
\tilde S^{-1}_{x_0}=\tilde S^{-1}. 
\end{equation} 
\end{enumerate}
\end{teo}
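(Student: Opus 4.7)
The plan is to extract a subsequential weak limit $(u,\mu)\in\X$ of the pair $(u_\eps,\mu_\eps)$ with $\mu_\eps\coloneqq|\nabla u_\eps|^{p(x)}\,dx$, use both directions of Theorem \ref{TEO} to sandwich $F^*(u,\mu)$ at the value $\tilde S^{-1}$, and then extract the dichotomy from the resulting equality. Since $\|\nabla u_\eps\|_{p(x)}\le 1$, the sequence is bounded in $W^{1,p(x)}_0(\Omega)$; up to a subsequence $u_\eps\rightharpoonup u$ weakly, and the CCP (\ref{CCP})--(\ref{CCP3}) yields measures $\nu=|u|^{q(x)}\,dx+\sum_{i}\nu_i\delta_{x_i}$ and $\mu\ge|\nabla u|^{p(x)}\,dx+\sum_{i}\mu_i\delta_{x_i}$, with $x_i\in\A$ and $\bar S_{x_i}\nu_i^{1/p^*(x_i)}\le\mu_i^{1/p(x_i)}$. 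Since $\mu(\overline{\Omega})=\lim\int_\Omega|\nabla u_\eps|^{p(x)}\,dx\le 1$, we have $(u,\mu)\in\X$ and $(u_\eps,\mu_\eps)\stackrel{\tau}{\to}(u,\mu)$.

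The sandwich is then straightforward. Applying (\ref{Limsup}) to $(u_\eps,\mu_\eps)$ together with Proposition \ref{SeS} yields $\tilde S^{-1}=\lim\tilde S_\eps^{-1}=\lim F_\eps(u_\eps,\mu_\eps)\le F^*(u,\mu)$. For the reverse, (\ref{Liminf}) supplies a recovery sequence $(v_\eps,\sigma_\eps)\stackrel{\tau}{\to}(u,\mu)$ for which $F_\eps(v_\eps,\sigma_\eps)\le\tilde S_\eps^{-1}$ holds for every $\eps$: if $d\sigma_\eps\ne|\nabla v_\eps|^{p(x)}\,dx$ then $F_\eps(v_\eps,\sigma_\eps)=0$, while if $d\sigma_\eps=|\nabla v_\eps|^{p(x)}\,dx$ then the constraint $\sigma_\eps(\overline{\Omega})\le 1$ forces $v_\eps\in\B(\Omega)$ and hence $F_\eps(v_\eps,\sigma_\eps)=\int_\Omega|v_\eps|^{q(x)-\eps}\,dx\le\tilde S_\eps^{-1}$. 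Passing to the liminf gives $F^*(u,\mu)\le\tilde S^{-1}$. Combining and invoking Lemma \ref{SobLoc2} to rewrite $\bar S_{x_i}^{-p^*(x_i)}=\tilde S_{x_i}^{-1}$ turns $F^*(u,\mu)=\tilde S^{-1}$ into the identity
$$
\int_\Omega|u|^{q(x)}\,dx+\sum_{i\in I}\mu_i^{q(x_i)/p(x_i)}\,\tilde S_{x_i}^{-1}=\tilde S^{-1}.
$$

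To extract the dichotomy I would combine this identity with the constraint $\int_\Omega|\nabla u|^{p(x)}\,dx+\sum_i\mu_i\le 1$, with (\ref{Cont}) giving $\tilde S_{x_i}^{-1}\le\tilde S^{-1}$, with the elementary $\mu_i^{q(x_i)/p(x_i)}\le\mu_i$ for $\mu_i\le 1$ (equality only when $\mu_i\in\{0,1\}$, since $q(x_i)/p(x_i)>1$ at any critical point), and with the bulk bound $\int_\Omega|u|^{q(x)}\,dx\le\tilde S^{-1}$ sharpened via the modular-norm relation of $L^{p(x)}(\Omega)$ by rescaling $u$. A subadditivity argument then forces either (i) all $\mu_i=0$, in which case $u$ attains $\tilde S^{-1}$, the CCP yields $\nu_i=0$ for every $i$, and a variable exponent Brezis-Lieb argument upgrades $|u_\eps|^{q(x)}\,dx\rightharpoonup|u|^{q(x)}\,dx$ to strong convergence in $L^{q(x)}(\Omega)$; or (ii) exactly one $\mu_{i_0}=1$, all other $\mu_i=0$, $u\equiv 0$, and $\tilde S_{x_{i_0}}^{-1}=\tilde S^{-1}$, from which unraveling the weak-$*$ limits of $|u_\eps|^{q(x)}\,dx$ and $|\nabla u_\eps|^{p(x)}\,dx$ recovers (\ref{Conc}). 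The main obstacle is precisely this dichotomy step: in the constant exponent case it follows at once from the superadditivity $t^a+\sum_i t_i^a\le(t+\sum_i t_i)^a$ valid for $a>1$, but in the variable setting the bulk inequality only provides $\int_\Omega|u|^{q(x)}\,dx\le\|\nabla u\|_{p(x)}^{q_-}\tilde S^{-1}$ without a clean power-law relation to $\int_\Omega|\nabla u|^{p(x)}\,dx$, and the atomic exponents $q(x_i)/p(x_i)$ genuinely depend on $i$; tracking the equality cases under the joint constraint is therefore the delicate combinatorial point that requires a careful use of the modular-norm equivalences.
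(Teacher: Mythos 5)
Your proposal follows the paper's route essentially step for step: extract the CCP limit $(u,\mu)$ of the extremals, sandwich $F^*(u,\mu)$ at the value $\tilde S^{-1}$ by applying the $\limsup$ inequality to $(u_\eps,\mu_\eps)$ and bounding the recovery sequence by $\tilde S_\eps^{-1}$ (the paper packages this second half as $\lim_\eps\sup_\X F_\eps=\sup_\X F^*$ together with $\sup_\X F_\eps=\tilde S_\eps^{-1}$ and Proposition \ref{SeS}, which is the same computation), and then read the dichotomy off the resulting identity. Your handling of the two cases — Brezis--Lieb to upgrade to strong $L^{q(x)}$ convergence when there are no atoms, and the CCP relation plus H\"older to get $\nu_{i_0}=\tilde S^{-1}=\tilde S_{x_{i_0}}^{-1}$ in the concentration case — is also the paper's.

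The one step you leave open, the ``delicate combinatorial point,'' is exactly where the paper inserts Proposition \ref{SobIn}, and it closes just as you suspect via the modular--norm comparison on the unit ball. Since $\int_\Omega|\nabla u|^{p(x)}\,dx\le\mu(\overline{\Omega})\le1$, one has $\|\nabla u\|_{p(x)}\le\bigl(\int_\Omega|\nabla u|^{p(x)}\,dx\bigr)^{1/p_+}$, and combining with \eqref{SobInequ} gives the clean power law you were missing:
$$
\int_\Omega|u|^{q(x)}\,dx\;\le\;\tilde S^{-1}\Bigl(\int_\Omega|\nabla u|^{p(x)}\,dx\Bigr)^{q_-/p_+}.
$$
Setting $t=\int_\Omega|\nabla u|^{p(x)}\,dx$ and $t_i=\mu_i$, the identity $F^*(u,\mu)=\tilde S^{-1}$ together with $\tilde S_{x_i}^{-1}\le\tilde S^{-1}$ yields $1\le t^{q_-/p_+}+\sum_i t_i^{p^*(x_i)/p(x_i)}$, while $t+\sum_i t_i\le1$. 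The fact that the exponents genuinely depend on $i$ and differ from the bulk exponent is harmless: no joint superadditivity is needed, only the termwise bound $s^{a}\le s$ for $s\in[0,1]$, $a>1$, with equality only at $s\in\{0,1\}$. Summing gives $1\le t+\sum_i t_i\le1$, so equality holds in every term, each of $t,t_i$ is $0$ or $1$, and exactly one equals $1$. (As in the paper, this step uses $q_-/p_+>1$.) With that supplied, your argument is complete and coincides with the paper's.
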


As an immediate consequence of (\ref{Cont}) and (\ref{Conc}), we obtain the following sufficient condition for the existence of an extremal for $\tilde S^{-1}$: 

\begin{corol}\label{existencia}
If $\sup_{x\in \A} \tilde S^{-1}_x<\tilde S^{-1}$, then any sequence of extremals for $\tilde S^{-1}_\eps$ converges, up to a subsequence, to some $u\in \B(\Omega)$ which is an extremal for $\tilde S^{-1}$. In particular, there exists an extremal for $\tilde S^{-1}$.
\end{corol}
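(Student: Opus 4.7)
The plan is essentially to chase the dichotomy in Theorem~\ref{u_eps} and show that the concentration alternative is incompatible with the strict inequality assumption. First I would verify that extremals $u_\eps$ for $\tilde S_\eps^{-1}$ actually exist: since $q(\cdot)-\eps$ satisfies $\inf_\Omega (p^*(x)-(q(x)-\eps))\ge \eps>0$, the embedding $W^{1,p(x)}_0(\Omega)\hookrightarrow L^{q(x)-\eps}(\Omega)$ is compact, so the direct method applied to the maximization of $u\mapsto \int_\Omega|u|^{q(x)-\eps}\,dx$ on the weakly closed set $\B(\Omega)$ delivers a maximizer $u_\eps\in\B(\Omega)$.

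Next I would invoke Theorem~\ref{u_eps} on this sequence $\{u_\eps\}_{\eps>0}$. That theorem offers exactly two possibilities: either a subsequence of $\{u_\eps\}$ converges strongly in $L^{q(x)}(\Omega)$ to an extremal of $\tilde S^{-1}$ (which is what we want), or the sequence concentrates at a single point $x_0\in\A$ and in this concentration case the equality \eqref{Conc} forces $\tilde S_{x_0}^{-1}=\tilde S^{-1}$.

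The key step is to rule out the second alternative using the hypothesis. Indeed, if concentration at $x_0\in\A$ occurred, then by \eqref{Conc} we would have
$$
\tilde S^{-1} \;=\; \tilde S_{x_0}^{-1} \;\le\; \sup_{x\in\A}\tilde S_x^{-1} \;<\; \tilde S^{-1},
$$
a contradiction. Therefore the first alternative must hold, yielding a subsequence of $\{u_\eps\}$ converging strongly in $L^{q(x)}(\Omega)$ to some $u\in\B(\Omega)$ which attains $\tilde S^{-1}$, and in particular the existence of an extremal is proved.

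I do not foresee a genuine obstacle here: the corollary is designed as a one-line consequence of the dichotomy \eqref{Conc} in Theorem~\ref{u_eps} together with the trivial bound \eqref{Cont}. The only minor subtlety is making sure the starting sequence $\{u_\eps\}$ exists, which is handled by the compact subcritical embedding noted above.
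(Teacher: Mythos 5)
Your proposal is correct and follows exactly the route the paper intends: the corollary is stated as an immediate consequence of the dichotomy in Theorem~\ref{u_eps} together with \eqref{Cont}, and your argument (existence of subcritical extremals by compactness, then ruling out the concentration alternative since \eqref{Conc} would contradict $\sup_{x\in\A}\tilde S^{-1}_x<\tilde S^{-1}$) is precisely that. No gaps.
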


\noindent This kind of sufficient condition of existence is common in the study of problems with critical exponent. In the constant exponent case, it goes back to \cite{Aubin, BN}, and \cite{Lions}. In the  variable exponent case, it was recently established and used by the authors in \cite{FBSS1, FBSS2, FBSS3, FBSS4} where precise condition on the exponents $p$ and $q$ were provided for this condition to hold.

\section{Proof of Theorem \ref{TEO}}

We divided the proof into two subsections: one for the $\limsup$ inequality (\ref{Limsup}) and other for the $\liminf$ inequality (\ref{Liminf}). The strategy of the proof is completely analogous to that of \cite{Palatucci} where the constant exponent case is treated with difficulties specific to the variable exponent setting. 

\subsection{Proof of the $\limsup$ property \eqref{Limsup}}

Consider a sequence $\{(u_\eps,\mu_\eps)\}_{\eps>0} \subset \X$ converging as $\eps\to 0$ to some $(u_0,\mu_0)\in \X$.  We can assume without loss of generality that $d\mu_\eps = |\nabla u_\eps|^{p(x)}\,dx$ for all $\eps>0$. Then by H\"older inequality (see \cite[lemma 3.2.20]{Diening}):  
\begin{align*}
F_\varepsilon(u_\varepsilon,\mu_\varepsilon)
&=\int_\Omega|u_\varepsilon|^{q(\cdot)-\varepsilon}\,dx\\
&\leq \Big(\frac{1}{(\frac{q(x)}{q(x)-\varepsilon})^-} + \frac{1}{(\frac{q(x)}{\varepsilon})^-}\Big)
\||u_\varepsilon|^{q(x)-\varepsilon}\|_{\frac{q(x)}{q(x)-\varepsilon}}\|1\|_\frac{q(x)}{\varepsilon}. 
\end{align*}			
Since $\Big(\frac{q(x)}{q(x)-\varepsilon}\Big)^- \to 1$, $\Big(\frac{\eps}{q(x)}\Big)^- \to 0$, and $\|1\|_{\frac{q(x)}{\varepsilon}}\to 1$ as $\varepsilon\to 0$, we obtain 
$$ 
\limsup_{\eps\to 0}F_\varepsilon(u_\varepsilon,\mu_\varepsilon) \le \limsup_{\eps\to 0}     \||u_\varepsilon|^{q(x)-\varepsilon}\|_{\frac{q(x)}{q(x)-\varepsilon}}. 
$$  

Up to some subsequence, by the CCP, there exists $u\in W^{1,p(x)}_0(\Omega)$ and measures $\nu, \mu\in \M(\overline{\Omega})$ of the form $\nu = |u|^{q(x)} dx + \sum_{i\in I} \nu_i \delta_{x_i}$, $\mu = |\nabla u|^{p(x)} dx + \tilde{\mu} +\sum_{i\in I}\mu_i \delta_{x_i}$ such that
\begin{align*}
&u_\eps \rightharpoonup u \qquad \text{weakly in } W^{1,p(x)}_0(\Omega) \text{ and in } L^{q(x)}(\Omega),\\
&|u_\eps|^{q(x)}\, dx \stackrel{*}{\rightharpoonup} \nu \quad \text{and} 
\quad |\nabla u_\eps|^{p(x)}\, dx \stackrel{*}{\rightharpoonup} \mu.
\end{align*}
Observe that $u=u_0$ and $\mu_0 = \mu$ since $(u_\eps,\mu_\eps = |\nabla u_\eps|^{p(x)})\to (u_0,\mu_0)$ in $\X$. It follows that 
$$ 
\rho_{q(x)}(u_\varepsilon)\coloneqq \int_\Omega |u_\eps|^{q(x)}\,dx \to \int_\Omega|u_0|^{q(x)}\,dx+\sum_{i\in I} \nu_i \qquad 
\text{ as } \eps\to 0. 
$$  

Since  
$$ 
\||u_\varepsilon|^{q(x)-\varepsilon}\|_{\frac{q(x)}{q(x)-\varepsilon}}\leq \max\Big\{\rho_{q(x)}(u_\varepsilon)^{(\frac{q(x)}{q(x)-\varepsilon})^+},\rho_{q(x)}(u_\varepsilon)^{(\frac{q(x)}{q(x)-\varepsilon})^-}\Big\},
$$
we obtain, in view of \eqref{CCP3}, that 
\begin{align*}
\limsup_{\varepsilon\to 0^+}F_\varepsilon(u_\varepsilon,\mu_\varepsilon)&\leq  \int_\Omega |u_0|^{q(x)}\,dx+ \sum_{i\in I} S_{x_i}^{-p(x_i)} \mu_i^{\frac{p^*(x_i)}{p(x_i)}} = F^*(u_0,\mu_0).
\end{align*}
This is the limsup inequality.
\qed

\subsection{Proof of the $\liminf$ property \eqref{Liminf}}

We begin with an elementary inequality that will be most useful in the sequel. Though we believe that this inequality is well known, we were unable to find it in the literature.

\begin{prop} \label{desigualdad.nico}
Given $p>1$ and $\theta\in (0,1]$ there exists a positive constant $C>0$ such that
$$ 
\Big||a+b|^p-|a|^p-|b|^p\Big|\le C\Big( |a|^{p-\theta}|b|^\theta + |a|^\theta |b|^{p-\theta} \Big) 
$$
for any $a,b\in\R^n$.
\end{prop}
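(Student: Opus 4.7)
The plan is to first obtain the standard one-sided estimate
$$
\bigl||a+b|^p-|a|^p\bigr|\le C_p\bigl(|a|^{p-1}|b|+|b|^p\bigr),
$$
valid for $p>1$, and then to interpolate its right-hand side by exploiting the ratio $|b|/|a|$. Both sides of the proposition are symmetric in $(a,b)$ and vanish when $a=0$ or $b=0$, so I may assume $a,b\neq 0$ and, invoking this symmetry, $|b|\le|a|$ without loss of generality.

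For the one-sided estimate I apply the fundamental theorem of calculus to $g(t)\coloneqq|a+tb|^p$ on $[0,1]$. Since $g'(t)=p|a+tb|^{p-2}(a+tb)\cdot b$, Cauchy-Schwarz gives $|g'(t)|\le p|a+tb|^{p-1}|b|\le p(|a|+|b|)^{p-1}|b|$, so integrating and using the elementary inequality $(s+t)^{p-1}\le C_p(s^{p-1}+t^{p-1})$, valid for $s,t\ge 0$ and $p>1$, yields the claim. Adding the trivial $|b|^p$ to both sides then gives
$$
\bigl||a+b|^p-|a|^p-|b|^p\bigr|\le C\bigl(|a|^{p-1}|b|+|b|^p\bigr).
$$

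The remaining step is to dominate $|a|^{p-1}|b|+|b|^p$ by $|a|^{p-\theta}|b|^\theta+|a|^\theta|b|^{p-\theta}$ in the regime $|b|\le|a|$. Since $\theta\in(0,1]$ one has $1-\theta\ge 0$, hence $(|b|/|a|)^{1-\theta}\le 1$, which rearranges to $|a|^{p-1}|b|\le|a|^{p-\theta}|b|^\theta$. Similarly $|b|^\theta\le|a|^\theta$ gives $|b|^p\le|a|^\theta|b|^{p-\theta}$. Adding these two inequalities yields the proposition with constant $2C$.

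The argument is entirely routine; the only point that requires minor attention is the symmetry reduction to $|b|\le|a|$, which is legitimate because the left-hand side and the sum on the right-hand side are both invariant under the exchange $a\leftrightarrow b$. I expect no genuine obstacle.
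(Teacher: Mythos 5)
Your proof is correct, and it takes a genuinely different route from the paper's. The paper normalizes $b=e_1$ by rotation and dilation invariance and then studies the quotient $f(a)=\bigl||a+e_1|^p-|a|^p-1\bigr|/\bigl(|a|^{p-\theta}+|a|^\theta\bigr)$, showing by asymptotic expansions that it stays bounded near $0$ and near $\infty$ and concluding by continuity on the intermediate compact region; this is a soft compactness argument whose constant is not explicit. You instead derive the quantitative one-sided bound $\bigl||a+b|^p-|a|^p\bigr|\le p\,(|a|+|b|)^{p-1}|b|$ from the fundamental theorem of calculus applied to $g(t)=|a+tb|^p$ (legitimate, since $x\mapsto|x|^p$ is $C^1$ on $\R^n$ for $p>1$), and then absorb $|a|^{p-1}|b|+|b|^p$ into $|a|^{p-\theta}|b|^\theta+|a|^\theta|b|^{p-\theta}$ using only $|b|\le|a|$ and $0\le 1-\theta$; the symmetry reduction to $|b|\le|a|$ is valid since both sides are symmetric in $a$ and $b$. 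Each step checks out, including the splitting $(s+t)^{p-1}\le C_p(s^{p-1}+t^{p-1})$ (convexity for $p\ge2$, subadditivity for $1<p<2$). A side benefit of your argument is that it produces an explicit constant, roughly $p\max\{2^{p-2},1\}+1$, which makes the paper's Remark \ref{remark} (uniformity of $C$ for $p\in[p_-,p_+]$) immediate rather than something to be re-read off from the asymptotic analysis.
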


\begin{proof}
Since this inequality is invariant under rotation and dilatation of $a$ and $b$ we can assume that $b=e_1=(1,0,\dots,0)$. The function $f\colon\R^n\backslash\{0\}\to [0,+\infty)$ defined by
$$ 
f(a)=\frac{\displaystyle \Big||a+e_1|^p-|a|^p-1\Big|}{\displaystyle |a|^{p-\theta}+|a|^\theta} 
$$
being continuous, it suffices to prove that it remains bounded near $0$ and $\infty$ to obtain that it is bounded in all  $ \R^n\backslash\{0\}$. First
\begin{equation*}
\begin{split}
 |a+e_1|^p & = \Big( |a|^2+2ae_1+1\Big)^{p/2} \\
& = \begin{cases}
1+O(|a|)  \hspace{5.26cm} \text{ for } |a|\ll 1, \\
|a|^p (1+O(1/|a|))^{p/2} = |a|^p + O(|a|^{p-1})   \quad \text{ for } |a|\gg 1.
\end{cases}
\end{split}
\end{equation*}
It follows that for $|a|\ll 1$ we have
\begin{equation*}
\begin{split}
 f(a) & =\frac{O(|a|)}{\displaystyle |a|^{p-\theta}+|a|^\theta}  \\
        &   = \begin{cases}
                \frac{\displaystyle  O(|a|)}{\displaystyle  |a|^\theta (1+o(1))}
                      = O(|a|^{1-\theta})  \le C \quad \text{ if } \theta\le \frac{p}{2} \\
                \frac{\displaystyle  O(|a|)}{\displaystyle  |a|^{p-\theta}(1+o(1))}
                     = O(|a|^{1-p+\theta})  \quad \text{ if } \theta\ge \frac{p}{2}
              \end{cases}
\end{split}
\end{equation*}
Hence if $p\ge 2$ so that $\theta\le 1\le p/2$, or if $p<2$ and $\theta\le p/2$, then $f$ is bounded near $0$. Now if $p<2$ and $\theta\ge p/2$ then $\theta\ge p/2\ge p-1$ so that $f$ is also bounded near $0$ in that case.

For $|a|\gg 1$ we obtain
\begin{equation*}
\begin{split}
 f(a) & = \frac{\displaystyle O(|a|^{p-1})}{\displaystyle |a|^{p-\theta}+|a|^\theta}   \\
& =\begin{cases}
  O(|a|^{\theta-1}) \le C \quad \text{ if } \theta\le \frac{p}{2} \\
  O(|a|^{p-1-\theta})  \hspace{0.84cm} \text{ if } \theta\ge \frac{p}{2}
              \end{cases}
\end{split}
\end{equation*}
Hence if $p\ge 2$ so that $\theta\le 1\le p/2$, or if $p<2$ and $\theta\le p/2$, then $f$ is bounded near infinity. If $p<2$ and  $\theta\ge p/2$ then $\theta\ge p/2\ge p-1$ and the same conclusion holds.
\end{proof}

\begin{remark}\label{remark}
It can be easily checked, from the above proof, that for any $p\in [p_-, p_+]$, the constant $C$ can be taken depending only on $p_-, p_+$ and $\theta$.
\end{remark}

\begin{prop}\label{Exist}
For any $(u,\mu)\in \X$ there exists a sequence $(u_\eps,|\nabla u_\eps|^{p(x)}\,dx)\in \X$ converging in $\X$ to $(u,\mu)$ as $\eps\to 0$.
\end{prop}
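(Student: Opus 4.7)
The plan is to take $u_\eps$ of the form $u + \sum_\ell \phi_{\ell,\eps}$, with $\phi_{\ell,\eps}$ smooth bumps concentrating at the atoms of $\mu$, after first reducing to the case of finitely many atoms. Since $\mu$ has both an at-most-countable atomic part $\sum_{i\in I}\mu_i\delta_{x_i}$ and a nonatomic part $\tilde\mu$, the preliminary step is to truncate $I$ to a finite subset and to approximate $\tilde\mu$ weakly-$*$ by finite atomic measures $\tilde\mu_k = \sum_{j=1}^{N_k}\tilde\mu(Q_j^k)\,\delta_{y_j^k}$ obtained from any Borel partition $\{Q_j^k\}_j$ of $\overline\Omega$ of mesh $<1/k$ (the nonatomicity of $\tilde\mu$ guaranteeing $\max_j \tilde\mu(Q_j^k)\to 0$). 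Since weak-$*$ convergence is metrizable on bounded subsets of $\M(\overline\Omega)$, a diagonal argument over $k$ reduces the problem to the following finite statement: given $\{(z_\ell,m_\ell)\}_{\ell=1}^N\subset\overline\Omega\times(0,\infty)$ with $\int|\nabla u|^{p(x)}\,dx + \sum_\ell m_\ell \le 1$, construct $u_r\in\B(\Omega)$ with $u_r\rightharpoonup u$ weakly in $L^{q(x)}(\Omega)$ and $|\nabla u_r|^{p(x)}\,dx \stackrel{*}{\rightharpoonup} |\nabla u|^{p(x)}\,dx + \sum_\ell m_\ell\,\delta_{z_\ell}$ as $r\to 0$.

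For the bumps, fix a nonzero $\rho\in C^\infty_c(B_1(0))$ and set $\phi_{\ell,r}(x) \coloneqq A_{\ell,r}\,\rho((x-z_\ell)/r)$, choosing the height $A_{\ell,r}>0$ via the intermediate value theorem applied to the continuous, monotone map $A\mapsto \int|\nabla(A\rho((\cdot-z_\ell)/r))|^{p(x)}\,dx$ so that $\int|\nabla\phi_{\ell,r}|^{p(x)}\,dx = m_\ell$. Because $\phi_{\ell,r}$ is supported in $\overline{B_r(z_\ell)}$ with gradient modular equal to $m_\ell$, for any $\varphi\in C_b(\overline\Omega)$ one has $\int \varphi\,|\nabla\phi_{\ell,r}|^{p(x)}\,dx = \varphi(z_\ell)\,m_\ell + o(1)$ as $r\to 0$, i.e.~$|\nabla\phi_{\ell,r}|^{p(x)}\,dx \stackrel{*}{\rightharpoonup} m_\ell\,\delta_{z_\ell}$. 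Take $r$ so small that the supports $\overline{B_r(z_\ell)}$ are pairwise disjoint and contained in $\Omega$ (shifting centers slightly inward if $z_\ell\in\partial\Omega$), set $u_r\coloneqq u+\sum_{\ell=1}^N\phi_{\ell,r}$, and renormalize by a factor $1-\delta_r$ with $\delta_r\to 0^+$ to enforce $u_r\in\B(\Omega)$. Each $\phi_{\ell,r}$ is bounded in $W^{1,p(x)}_0(\Omega)$, hence in $L^{q(x)}$ by Sobolev embedding, and converges pointwise a.e.~to zero, so $\phi_{\ell,r}\rightharpoonup 0$ in $L^{q(x)}$ and consequently $u_r\rightharpoonup u$ weakly in $L^{q(x)}(\Omega)$.

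To identify the weak-$*$ limit of the gradient measures, decompose pointwise
$$
|\nabla u_r(x)|^{p(x)} = |\nabla u(x)|^{p(x)} + \sum_{\ell=1}^N |\nabla\phi_{\ell,r}(x)|^{p(x)} + R_r(x),
$$
the remainder $R_r$ being supported in the shrinking set $\bigcup_\ell B_r(z_\ell)$ (where only one $\phi_{\ell,r}$ is active at a time). Applying Proposition \ref{desigualdad.nico} pointwise with $a=\nabla u$, $b=\nabla\phi_{\ell,r}$ and a small $\theta\in(0,1]$ gives
$$
|R_r(x)|\le C\sum_{\ell=1}^N\bigl(|\nabla u|^{p(x)-\theta}|\nabla\phi_{\ell,r}|^\theta + |\nabla u|^\theta|\nabla\phi_{\ell,r}|^{p(x)-\theta}\bigr),
$$
with $C=C(p_-,p_+,\theta)$ by Remark \ref{remark}. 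The variable exponent H\"older inequality with conjugate exponents $p(x)/(p(x)-\theta)$ and $p(x)/\theta$ bounds $\int|R_r|\,dx$ by the product of a uniformly bounded factor (built from $\nabla\phi_{\ell,r}$, whose modular is fixed at $m_\ell$) and a vanishing factor (built from $\nabla u$ restricted to the shrinking ball $B_r(z_\ell)$, which tends to zero by absolute continuity of $\int|\nabla u|^{p(x)}\,dx$); choosing $\theta$ small makes the estimate legitimate. Hence $\|R_r\|_{L^1(\Omega)}\to 0$, and testing against $\varphi\in C_b(\overline\Omega)$ combined with the preceding convergence $|\nabla\phi_{\ell,r}|^{p(x)}\,dx \stackrel{*}{\rightharpoonup} m_\ell\delta_{z_\ell}$ yields the desired weak-$*$ limit.

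The main obstacle I anticipate is precisely this cross-term estimate: the classical scalar inequality $(a+b)^p - a^p - b^p = O(a^{p-\theta}b^\theta + a^\theta b^{p-\theta})$ is replaced by the pointwise bound of Proposition \ref{desigualdad.nico}, and the auxiliary exponent $\theta$ must be chosen small enough that the shrinking-support decay of the $\nabla u$ factor beats the fixed $L^{p(x)}$-modular of the bumps, with a constant uniform in $x\in\Omega$; this is precisely where the uniform constant guaranteed by Remark \ref{remark} and, ultimately, the log-H\"older control on $p$ play their decisive role.
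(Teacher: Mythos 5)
Your proposal is correct and follows essentially the same route as the paper's proof: concentrating bumps whose gradient modulars are calibrated to the prescribed masses, superposition with $u$, control of the cross terms via Proposition \ref{desigualdad.nico} (with the uniform constant of Remark \ref{remark}), and a final renormalization by a factor $1-\delta_\eps$. The only differences are organizational: the paper discretizes $\tilde\mu+\sum_i\mu_i\delta_{x_i}$ in a single stroke by covering $\overline\Omega_\eps$ with disjoint cubes of side $\eps$ after pushing $\mu$ inward by $T_\eps(x)=x-\sqrt{\eps}\,\n(x)$ (which replaces both your finite-atomic approximation with diagonal argument and your ad hoc shifting of boundary centers), and it takes $\theta=1$ in the cross-term estimate, concluding from the weak convergence to $0$ of $|\nabla u_\eps|$ in $L^{p(x)}$ and of $|\nabla u_\eps|^{p(x)-1}$ in $L^{p(x)'}$ rather than from your small-$\theta$ H\"older bound combined with absolute continuity of $\int|\nabla u|^{p(x)}\,dx$ over the shrinking supports; both variants are sound.
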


\begin{proof} We adapt the proof of M. Amar and A. Garroni \cite{Amar-Garroni}. We first prove the claim in the case $u=0$. We denote by $\n$ the unit exterior normal vector to $\p\Omega$. We extend $\n$ to a smooth vector field in $\R^n$ with compact support in a small neighborhood of $\p\Omega$, and consider $T_\eps\colon \R^n\to \R^n$ given by $T_\eps(x)= x-\sqrt{\eps}\n(x)$.

We consider $\mu$ as a measure in all $\R^n$ with support in $\bar\Omega$ and let $\mu_\eps\coloneqq T_\eps\sharp\mu$ be the push-forward of $\mu$ under $T_\eps$ in the sense that $\mu_\eps(E) = \mu(T^{-1}_\eps(E))$ for any measurable subset $E\subset \bar\Omega$. Then $\mu_\eps$ has support in $\bar \Omega_\eps\coloneqq \{x\in\bar\Omega,\, dist(x,\p\Omega)\ge \sqrt{\eps}\}$.

We cover $\bar\Omega_\eps$ by open disjoint cubes $Q_{i,\eps}=x_{i,\eps}+\eps Q$ centered at $x_{i,\eps}$, where $Q$ is the cube centered at $0$ with sides parallel to the coordinate axes of length $1$. Notice that $Q_{i,\eps}\subset\Omega$ since $\eps<\sqrt{\eps}$. Given some $\phi\in C_c^\infty(B_{1/2})$ we define $u_{i,\eps}\in C_c^\infty(Q_{i,\eps})$ by
$$
u_{i,\eps}(x)=t_{i,\eps}\eps^{-\frac{n-p(x_{i,\eps})}{p(x_{i,\eps})}}\phi\Big(\frac{x-x_{i,\eps}}{\eps}\Big), 
$$
where $t_{i,\eps}\ge 0$ is chosen such that
$$
\int |\nabla u_{i,\eps}|^{p(x)}\,dx = \mu_\eps(Q_{i,\eps}). 
$$
Notice that the scalars $t_{i,\eps}$ are uniformly bounded. We then consider $u_\eps\coloneqq \sum_i u_{i,\eps}\in W^{1,p(x)}_0(\Omega)$. Notice that for any $\eps$, the $u_{i,\eps}$ have disjoint support. As a consequence
$$ 
\int_\Omega |\nabla u_\eps|^{p(x)}\,dx = \sum_i \int_{Q_{i,\eps}} |\nabla u_{i,\eps}|^{p(x)}\,dx = \sum_i \mu_\eps(Q_{i,\eps}) 
= \sum_i \mu_\eps(Q_{i,\eps}\cap \bar \Omega_\eps) = \mu_\eps(\bar \Omega_\eps) = \mu(\bar \Omega)\le 1. 
$$
It follows that $(u_\eps,|\nabla u_\eps|^{p(x)}\,dx)\in\X$ for any $\eps>0$, and that $\{u_\eps\}_{\eps>0}$ is bounded in $W^{1,p(x)}_0(\Omega)$. Morover we have $u_\eps\to 0$ in $L^{p(x)}(\Omega)$. In fact, 
\begin{equation*}
\begin{split}
 \int_\Omega |u_\eps|^{p(x)}\,dx & = \sum_i \int_{Q_{i,\eps}} |u_{i,\eps}|^{p(x)}\,dx
    \le C\sum_i \int_Q \eps^{n(1-\frac{p(x_{i,\eps}+\eps y)}{p(x_{i,\eps})})}\eps^{p(x_{i,\eps}+\eps y)} |\phi|^{p(x_{i,\eps}+\eps y)}\, dy \\
  & \le C\eps^{p^-} \to 0.
\end{split}
\end{equation*}
Hence we can assume that $u_\eps\rightharpoonup 0$ weakly in $L^{q(x)}(\Omega)$. It thus remain to prove that $|\nabla u_\eps|^{p(x)}\,dx\to \mu$ weakly in $M(\bar\Omega)$. Let $\psi\in C(\bar\Omega)$ and $\delta>0$. Then for $\eps>0$ small we have, by the uniform continuity of $\psi$ over $\bar \Omega$, that
\begin{equation*}
\begin{split}
 \int_\Omega \psi |\nabla u_\eps|^{p(x)}\,dx
 & = \sum_i \int_{Q_{i,\eps}} \psi |\nabla u_{i,\eps}|^{p(x)}\,dx
 = \sum_i (\psi(x_{i,\eps})+O(\delta)) \int_{Q_{i,\eps}} |\nabla u_{i,\eps}|^{p(x)}\,dx \\
 & = \sum_i (\psi(x_{i,\eps})+O(\delta)) \mu_\eps(Q_{i,\eps})
  =  \sum_i \psi(x_{i,\eps}) \mu_\eps(Q_{i,\eps}) +  O(\delta) \\
 & = \int_\Omega \psi \,d\mu_\eps +  O(\delta)
  = \int_\Omega \psi\circ T_\eps \,d\mu +  O(\delta)
  = \int_\Omega \psi \,d\mu +  O(\delta).
\end{split}
\end{equation*}

For a general pair $(u,\mu)\in X$, we write $\mu = |\nabla u|^{p(x)} + \tilde \mu + \sum_i \mu_i\delta_{x_i}$, and consider $u_\eps$ such that $(u_\eps,|\nabla u_\eps|^{p(x)})\stackrel{\tau}{\to} (0,\tilde \mu +\sum_i \mu_i \delta_{x_i})$ in $X$ as given by the previous step. Then $\tilde u_\eps = u_\eps + u$ verifies $(\tilde u_\eps,|\nabla \tilde u_\eps|^{p(x)})\stackrel{\tau}{\to} (u,\mu)$ in $X$. Indeed in view of Proposition \ref{desigualdad.nico} and  Remark \ref{remark}, there exists a constant $C>0$ such that 
\begin{equation*}
\begin{split}
  \int_\Omega \Big| |\nabla\tilde u_\eps|^{p(x)} - |\nabla u|^{p(x)} - |\nabla u_\eps|^{p(x)} \Big|\,dx 
 \le C\int_\Omega |\nabla u|^{p(x)-1}|\nabla u_\eps|\,dx 
   + C\int_\Omega |\nabla u||\nabla u_\eps|^{p(x)-1}\,dx.  
\end{split}
\end{equation*}
To prove that the integrals in the rhs goes to $0$, we observe that the sequence $(|\nabla u_\eps|)_\eps$ is bounded in $L^{p(x)}(\Omega)$ and converges to $0$  in $L^1(\Omega)$, so that, up to a subsequence, it also converges to $0$ a.e.. It follows that $|\nabla u_\eps|\to 0$ weakly in $L^{p(x)}(\Omega)$. The convergence to $0$ of the first integral in the rhs follows. The convergence to $0$ of the second integral can be proved in the same way noticing that the sequence  $\{|\nabla u_\eps|^{p(x)-1}\}_{\eps>0}$ converges weakly to 0 in $L^{p(x)'}(\Omega)$ being  bounded in $L^{p(x)'}(\Omega)$ and converging to 0 a.e. As a consequence for any $\psi\in C(\bar\Omega)$, we have
$$  \int_\Omega \psi |\nabla\tilde u_\eps|^{p(x)}\,dx 
 =  \int_\Omega \psi |\nabla u|^{p(x)}\,dx + \int_\Omega \psi |\nabla u_\eps|^{p(x)}\,dx + o(1) 
 = \int_\Omega \psi \,d\mu  + o(1),
$$
i.e. $|\nabla \tilde u_\eps|^{p(x)}\,dx \to \mu $ weakly in $\M(\bar\Omega)$. 
In particular taking $\psi\equiv 1$ gives 
$$  \limsup_{\eps\to 0}\int_\Omega |\nabla\tilde u_\eps|^{p(x)}\,dx \le 1.$$ 
For those $\eps>0$ such that $\int_\Omega |\nabla\tilde u_\eps|^{p(x)}\,dx > 1$, we consider $\delta_\eps>0$, $\delta_\eps\to 0$, such that 
$$
\int_\Omega (1-\delta_\eps)^{p(x)}|\nabla\tilde u_\eps|^{p(x)}\,dx=1,
$$ 
and replace $\tilde u_\eps$ by $(1-\delta_\eps)\tilde u_\eps$. Hence $(\tilde u_\eps,|\nabla\tilde u_\eps|^{p(x)})\in\X$. 

This finishes the proof of the Proposition.
\end{proof}

With the aid of the previous result, we can prove the $\liminf$ property when $\mu$ has no atoms. 

\begin{prop}\label{no atomica}
Let $(u,\mu)\in \X$  such that $\mu$ has no atomic part i.e. $d\mu=|\nabla u|^{p(x)}\, dx + d\widetilde{\mu}$.
Then
$$ \lim_{\varepsilon\to 0}F_\varepsilon(u_\varepsilon,\mu_\varepsilon) = F^*(u,\mu) $$
for every sequence $\{(u_\varepsilon,|\nabla u_\eps|^{p(x)})\}_{\eps>0}\subset \X$ converging to $(u,\mu)$  as $\varepsilon\to 0$.
\end{prop}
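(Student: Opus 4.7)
The key observation is that since $\mu$ has no atomic part, $F^*(u,\mu)=\int_\Omega |u|^{q(x)}\,dx$, so the already-established limsup inequality \eqref{Limsup} furnishes
$$\limsup_{\eps\to 0}F_\eps(u_\eps,\mu_\eps)\le \int_\Omega|u|^{q(x)}\,dx$$
for free. It therefore suffices to prove the matching liminf bound.

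First I would apply the CCP \eqref{CCP}--\eqref{CCP3} to a subsequence of $\{u_\eps\}$, which is bounded in $W^{1,p(x)}_0(\Omega)$ since $\mu_\eps(\overline\Omega)\le 1$. By uniqueness of the weak-$*$ limit the measure appearing in \eqref{CCP2} must coincide with $\mu$, and since $\mu$ is atomless every $\mu_i$ vanishes; then \eqref{CCP3} forces $\nu_i=0$ for every $i$. Hence $|u_\eps|^{q(x)}\,dx\stackrel{*}{\rightharpoonup}|u|^{q(x)}\,dx$ in $\M(\overline\Omega)$ and, testing against the constant $1\in C(\overline\Omega)$,
\begin{equation}\label{modconv}
\int_\Omega|u_\eps|^{q(x)}\,dx\longrightarrow\int_\Omega|u|^{q(x)}\,dx.
\end{equation}
Because $L^{q(x)}(\Omega)$ is uniformly convex under \eqref{p+-}, the modular convergence \eqref{modconv} (which is equivalent to $\|u_\eps\|_{q(x)}\to\|u\|_{q(x)}$) together with the weak convergence $u_\eps\rightharpoonup u$ in $L^{q(x)}(\Omega)$ built into the definition of $\stackrel{\tau}{\to}$ upgrades to strong convergence $u_\eps\to u$ in $L^{q(x)}(\Omega)$. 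In particular $\{|u_\eps|^{q(x)}\}_\eps$ is equi-integrable and, passing to a further subsequence, $u_\eps\to u$ almost everywhere.

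Finally I would compare $\int|u_\eps|^{q(x)-\eps}\,dx$ with $\int|u_\eps|^{q(x)}\,dx$ so that \eqref{modconv} delivers the conclusion. I split $\Omega$ into $\{|u_\eps|\le 1\}$ and $\{|u_\eps|>1\}$. On the first set, $0\le |u_\eps|^{q(x)-\eps}-|u_\eps|^{q(x)}\le 1$ and the integrand tends to $0$ almost everywhere, so dominated convergence handles the piece. On the second set $|u_\eps|^{q(x)}\ge|u_\eps|^{q(x)-\eps}$, and I further decompose between $\{1<|u_\eps|\le M\}$, where $|u_\eps|^\eps-1\to 0$ uniformly as $\eps\to 0$, and $\{|u_\eps|>M\}$, where by equi-integrability $\int|u_\eps|^{q(x)}\,dx$ is small uniformly in $\eps$ once $M$ is large. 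Letting $\eps\to 0$ and then $M\to\infty$, the difference integral vanishes, which combined with \eqref{modconv} yields $\int_\Omega|u_\eps|^{q(x)-\eps}\,dx\to\int_\Omega|u|^{q(x)}\,dx=F^*(u,\mu)$.

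The principal obstacle is this last step: both the functions $u_\eps$ and the exponent $q(x)-\eps$ vary with $\eps$, so no single dominating function controls the tails directly. Extracting strong $L^{q(x)}$ convergence (hence equi-integrability of $|u_\eps|^{q(x)}$) from the CCP via the atomless hypothesis on $\mu$ is precisely what makes the truncation/dominated-convergence argument succeed; once strong convergence is in hand the variable-exponent bookkeeping is routine.
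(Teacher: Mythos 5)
Your overall strategy is the same as the paper's: invoke the CCP, use the atomlessness of $\mu$ to kill all the $\mu_i$ and hence all the $\nu_i$, deduce strong $L^{q(x)}$-convergence of $u_\eps$ to $u$, and then remove the $-\eps$ from the exponent. Two remarks.

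First, the one step that does not hold as written is the passage from $\int_\Omega|u_\eps|^{q(x)}\,dx\to\int_\Omega|u|^{q(x)}\,dx$ to $\|u_\eps\|_{q(x)}\to\|u\|_{q(x)}$. In a variable exponent space the modular is not homogeneous, and convergence of modulars does not imply convergence of norms: if $q$ takes two values $q_1<q_2$ on disjoint sets $A$ and $B$, a function supported in $A$ and one supported in $B$ with the same modular $c$ have norms $c^{1/q_1}$ and $c^{1/q_2}$ respectively. (What is true under \eqref{p+-} is that $\rho_{q(x)}(u_\eps-u)\to0$ if and only if $\|u_\eps-u\|_{q(x)}\to0$, a statement about the difference, not about the individual modulars.) So the Radon--Riesz/uniform convexity argument cannot be launched from the modular information alone. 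The fix is standard and is the one the paper itself uses at the end of the proof of Theorem \ref{u_eps}: since $\{u_\eps\}_{\eps>0}$ is bounded in $W^{1,p(x)}_0(\Omega)$, a compact subcritical embedding gives $u_\eps\to u$ a.e.\ along a subsequence, and then the Brezis--Lieb lemma converts $\int_\Omega|u_\eps|^{q(x)}\,dx\to\int_\Omega|u|^{q(x)}\,dx$ into $\int_\Omega|u_\eps-u|^{q(x)}\,dx\to0$, i.e.\ strong convergence. Note also that the equi-integrability of $\{|u_\eps|^{q(x)}\}_\eps$ that your truncation step requires does not follow from the weak-$*$ convergence of the measures $|u_\eps|^{q(x)}\,dx$ alone (densities can spread out while staying non-uniformly integrable); it is again the $L^1$-convergence of $|u_\eps|^{q(x)}$ supplied by Brezis--Lieb that delivers it.

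Second, once a.e.\ convergence is in hand, the paper's route to the lower bound is shorter than yours: Fatou applied to $|u_\eps|^{q(x)-\eps}\to|u|^{q(x)}$ a.e.\ gives $\liminf_{\eps\to0}\int_\Omega|u_\eps|^{q(x)-\eps}\,dx\ge\int_\Omega|u|^{q(x)}\,dx$ directly, which combined with \eqref{Limsup} closes the proof without the two-parameter truncation in $M$ and $\eps$. Your truncation argument is correct once equi-integrability is properly justified, just heavier than necessary.
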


\begin{remark} Notice that such a sequence exists in view of the previous  Proposition \ref{Exist}.
\end{remark}

\begin{proof}
Consider a sequence  $\{(u_\varepsilon,\mu_\eps\coloneqq |\nabla u_\eps|^{p(x)})\}_{\eps>0}\subset \X$ converging to $(u,\mu)$. 
According to the CCP, the atoms of the measure $\nu\coloneqq \lim |u_\eps|^{q(x)}\,dx$ (limit in $\M(\bar\Omega)$ - which exists up to a subsequence) are also atoms of $\mu$. 
Since by assumption $\mu$ has no atomic part, we deduce that  $\nu$ also has no atoms, and thus that $u_\varepsilon\to u$ strongly in $L^{q(x)}(\Omega)$ and a.e.. As in the proof of the $\limsup$ property we obtain that 
$$ \limsup_{\varepsilon\to 0}\int_\Omega|u_\varepsilon|^{q(x)-\varepsilon}\,dx \le \int_\Omega|u|^{q(x)}\,dx. $$ 
Moreover using Fatou lemma, 
$$  \liminf_{\varepsilon\to 0}\int_\Omega|u_\varepsilon|^{q(x)-\varepsilon}\,dx \ge \int_\Omega|u|^{q(x)}\,dx. $$ 
Hence
$$ \lim_{\eps\to 0} F_\eps(u_\eps, \mu_\eps) 
 = \lim_{\varepsilon\to 0}\int_\Omega|u_\varepsilon|^{q(x)-\varepsilon}\,dx
 =\int_\Omega|u|^{q(x)}\,dx= F^*(u,\mu),
$$
as we wanted to show.
\end{proof}

We now prove the $\liminf$ property assuming that $\mu$ is purely atomic with a finite number of atoms and total mass strictly less that $1$. 

\begin{prop}\label{atomica}
For every $(u,\mu)\in \X$ of the form $(u,\mu)=(0,\sum_{i=0}^k\mu_i\delta_{x_i})$, with $x_i\in\A$ and $\mu_i>0$ such that
$\mu(\bar\Omega)=\sum_i \mu_i<1$,
there exists $\{(u_\eps,|\nabla u_\eps|^{p(x)})\}_{\eps>0}\subset \X$ converging in $\X$ to $(u,\mu)$ and such that
$$
\lim_{\eps\to 0} F_\eps(u_\eps,|\nabla u_\eps|^{p(x)})=F^*(u,\mu).
$$
\end{prop}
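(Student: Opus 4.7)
The plan is to build a recovery sequence by placing, near each atom $x_i$, a family of highly concentrated profiles that nearly attain the local Sobolev constant $\bar S_{x_i}$. Since the atoms are finitely many and distinct, the construction can be carried out in disjoint small balls around each $x_i$, so the problem reduces to the single-atom case $(u,\mu)=(0,\mu_0\delta_{x_0})$ with $\mu_0\in(0,1)$ and $x_0\in\A$.

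For the single atom, I would proceed as follows. Invoking Lemma \ref{SobLoc2} (which identifies $\widetilde S_{x_0}^{-1}=\bar S_{x_0}^{-q(x_0)}$) and the fact that the local Sobolev constant at $x_0$ coincides with the best constant for the constant exponent $p(x_0)$ in $\R^n$, select $\phi\in C_c^\infty(B_{1/2})$ with $\int|\nabla\phi|^{p(x_0)}\,dy=1$ and with $\int|\phi|^{p^*(x_0)}\,dy$ arbitrarily close to $\bar S_{x_0}^{-p^*(x_0)}$. For $\lambda\gg 1$ consider the concentrating profile
\[
v_\lambda(x)\coloneqq \mu_0^{1/p(x_0)}\,\lambda^{(n-p(x_0))/p(x_0)}\,\phi(\lambda(x-x_0)),
\]
supported in the ball of radius $1/(2\lambda)$ around $x_0$, hence in $\Omega$ for $\lambda$ large. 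Under the constant exponent $p(x_0)$ a direct change of variables yields $\int|\nabla v_\lambda|^{p(x_0)}\,dx=\mu_0$ and $\int|v_\lambda|^{p^*(x_0)}\,dx$ as close as desired to $\mu_0^{p^*(x_0)/p(x_0)}\bar S_{x_0}^{-p^*(x_0)}$. Passage to the variable exponents is then controlled using the refined log-H\"older hypothesis (\ref{logholder+}): upon performing the substitution $y=\lambda(x-x_0)$, the quantities $(p(x_0+y/\lambda)-p(x_0))\log\lambda$ and $(q(x_0+y/\lambda)-q(x_0))\log\lambda$ vanish uniformly on $\operatorname{supp}\phi$, so the spurious factors $\lambda^{o(1)}$ coming from the change of variables converge to $1$ and the target integrals retain the values computed at constant exponent.

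With this in hand I would pick a diagonal $\lambda=\lambda_\eps\to\infty$ fast enough so that both the normalization defect $\int|\nabla v_{\lambda_\eps}|^{p(x)}\,dx-\mu_0$ and the discrepancy $\int|v_{\lambda_\eps}|^{q(x)-\eps}\,dx-\mu_0^{p^*(x_0)/p(x_0)}\bar S_{x_0}^{-p^*(x_0)}$ tend to $0$. A tiny scalar correction $t_\eps\to 1$ then enforces $\int|\nabla u_\eps|^{p(x)}\,dx=\mu_0$ exactly, giving $u_\eps\coloneqq t_\eps v_{\lambda_\eps}\in W_0^{1,p(x)}(\Omega)$ with $(u_\eps,|\nabla u_\eps|^{p(x)}\,dx)\in\X$, $u_\eps\to 0$ in $L^{p(x)}(\Omega)$ (hence weakly in $L^{q(x)}$, as in Proposition \ref{Exist}), $|\nabla u_\eps|^{p(x)}\,dx\stackrel{*}{\rightharpoonup}\mu_0\delta_{x_0}$, and $F_\eps(u_\eps,|\nabla u_\eps|^{p(x)}\,dx)\to F^*(0,\mu_0\delta_{x_0})$. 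For the multi-atom case, I would carry out this construction independently inside disjoint balls $B_{r_i}(x_i)\subset\Omega$ to produce profiles $u_\eps^{(i)}$, and then set $u_\eps\coloneqq \sum_{i=0}^k u_\eps^{(i)}$. Disjointness of supports makes every integral split into a sum of single-atom contributions; the assumption $\sum_i\mu_i<1$ secures $(u_\eps,|\nabla u_\eps|^{p(x)}\,dx)\in\X$, and $F_\eps(u_\eps,|\nabla u_\eps|^{p(x)}\,dx)\to\sum_i\mu_i^{p^*(x_i)/p(x_i)}\bar S_{x_i}^{-p^*(x_i)}=F^*(u,\mu)$.

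The main obstacle is the delicate variable-exponent accounting in the concentrating step: the exponent of $\lambda$ produced by the substitution $y=\lambda(x-x_0)$ vanishes identically only when both $p$ and $q$ are frozen at $x_0$, so the deviation has to be absorbed via (\ref{logholder+}) rather than the bare log-H\"older (\ref{log}); simultaneously, the $-\eps$ shift in the exponent $q(x)-\eps$ introduces an extra factor that must be kept under control, which forces a careful coordination between the rates $\lambda_\eps\to\infty$ and $\eps\to 0$. Once this estimate is secured, everything else is routine.
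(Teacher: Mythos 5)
There is a genuine gap in your single-atom construction. You build the concentrating bubble from a fixed profile $\phi$ with the exponents frozen at $x_0$, and you need $\int|\phi|^{p^*(x_0)}\,dy$ to come arbitrarily close to $\bar S_{x_0}^{-p^*(x_0)}$ under the constraint $\int|\nabla\phi|^{p(x_0)}\,dy=1$. The supremum of that quantity over compactly supported $\phi$ is the \emph{constant-exponent} best Sobolev constant for $p(x_0)$ in $\R^n$, whereas $\bar S_{x_0}$ is defined in \eqref{defSobLoc} as the limit of the \emph{variable-exponent} constants on shrinking balls. The identity between these two numbers is precisely one of the delicate points of \cite{FBSS1}: it is not proved anywhere in this paper, and in general it requires additional hypotheses on the local behaviour of $p$ and $q$ near $x_0$ (e.g.\ that $x_0$ is a suitable local extremum of the exponents). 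If the frozen-exponent supremum is strictly smaller than $\bar S_{x_0}^{-p^*(x_0)}$, your sequence converges to the wrong limit and the equality $\lim F_\eps = F^*$ fails. Relatedly, you invoke the strengthened log-H\"older condition \eqref{logholder+} to absorb the $\lambda^{o(1)}$ factors, but that condition is not among the standing assumptions of Theorem \ref{TEO}; only \eqref{log} is assumed.

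The paper avoids both problems by never writing down an explicit bubble: in Lemma \ref{lema.construccion} it takes, for each small $\delta$, a near-extremal $u_\delta\in W_0^{1,p(x)}(B_\delta(x_0))$ of the localized variable-exponent problem on the ball $B_\delta(x_0)$ itself. Such functions exist by the very definition \eqref{LocSob} of $\widetilde S_{x_0}^{-1}$ as a limit of suprema over shrinking balls, they automatically concentrate ($|\nabla u_\delta|^{p(x)}\,dx\rightharpoonup\delta_{x_0}$ since the total modular is $1$ and the support shrinks), and the passage from $q(x)-\eps$ to $q(x)$ is handled by dominated convergence for fixed $\delta$ followed by a diagonal argument. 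Your multi-atom assembly (disjoint supports, scaling each profile by $\mu_i^{1/p(x_i)}$) matches the paper's and is fine; it is the single-atom step that needs to be replaced by the abstract near-extremal argument, or else supplemented by a proof of the identification $\bar S_{x_0}=K(n,p(x_0))^{-1}$ under hypotheses you would then have to add to the statement.
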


\noindent The proof relies on the following two lemmas. The first one states gives the relation between the two localized Sobolev constant \eqref{defSobLoc} and \eqref{LocSob}. 

\begin{lema}\label{SobLoc2}
For any $x_0\in\A$, 
$$ \widetilde S_{x_0}^{-1}=\bar S_{x_0}^{-q(x_0)}. $$
\end{lema}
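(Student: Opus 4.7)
The plan is to establish both inequalities $\widetilde S_{x_0}^{-1}\le \bar S_{x_0}^{-q(x_0)}$ and $\widetilde S_{x_0}^{-1}\ge \bar S_{x_0}^{-q(x_0)}$ by combining the defining inequality of the Sobolev constant $S_\eps\coloneqq S(p(\cdot),q(\cdot),B_\eps(x_0))$ with the standard modular/norm relation in variable exponent Lebesgue spaces, namely
$$
\min\bigl(\|u\|_{q(x),B_\eps}^{q_-(\eps)},\|u\|_{q(x),B_\eps}^{q_+(\eps)}\bigr)\le \int_{B_\eps(x_0)} |u|^{q(x)}\,dx \le \max\bigl(\|u\|_{q(x),B_\eps}^{q_-(\eps)},\|u\|_{q(x),B_\eps}^{q_+(\eps)}\bigr),
$$
where $q_-(\eps)\coloneqq\inf_{B_\eps(x_0)\cap\Omega}q$ and $q_+(\eps)\coloneqq\sup_{B_\eps(x_0)\cap\Omega}q$. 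The (log-Hölder) continuity of $q$ forces $q_{\pm}(\eps)\to q(x_0)$, and by (\ref{defSobLoc}) we have $S_\eps\to \bar S_{x_0}$ as $\eps\to 0$.

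\textbf{Upper bound.} Any $u\in\B(B_\eps(x_0))$ satisfies $\|\nabla u\|_{p(x)}\le 1$, hence by definition of $S_\eps$,
$$
\|u\|_{q(x),B_\eps}\le S_\eps^{-1}\|\nabla u\|_{p(x),B_\eps}\le S_\eps^{-1}.
$$
Plugging this into the upper half of the modular/norm inequality and taking the supremum over $u\in\B(B_\eps(x_0))$ gives
$$
\sup_{u\in\B(B_\eps(x_0))}\int_{B_\eps(x_0)}|u|^{q(x)}\,dx\le \max\bigl(S_\eps^{-q_-(\eps)},S_\eps^{-q_+(\eps)}\bigr).
$$
Letting $\eps\to 0$ and using $S_\eps\to\bar S_{x_0}$, $q_\pm(\eps)\to q(x_0)$ yields $\widetilde S_{x_0}^{-1}\le \bar S_{x_0}^{-q(x_0)}$.

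\textbf{Lower bound.} For each $\eps>0$ pick a near-minimizer $v_\eps\in W^{1,p(x)}_0(B_\eps(x_0))$ with $\|\nabla v_\eps\|_{p(x)}=1$ and $\|v_\eps\|_{q(x)}\ge(1-\eps)S_\eps^{-1}$, so that $v_\eps\in\B(B_\eps(x_0))$. The lower half of the modular/norm inequality gives
$$
\int_{B_\eps(x_0)}|v_\eps|^{q(x)}\,dx\ge \min\bigl(\|v_\eps\|_{q(x)}^{q_-(\eps)},\|v_\eps\|_{q(x)}^{q_+(\eps)}\bigr)\xrightarrow[\eps\to 0]{}\bar S_{x_0}^{-q(x_0)}.
$$
Fix $\eps_0>0$. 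For $\eps\le\eps_0$, extending $v_\eps$ by zero shows $v_\eps\in\B(B_{\eps_0}(x_0))$, so
$$
\sup_{u\in\B(B_{\eps_0}(x_0))}\int_{B_{\eps_0}(x_0)}|u|^{q(x)}\,dx\ge \int_{B_\eps(x_0)}|v_\eps|^{q(x)}\,dx.
$$
Letting $\eps\to 0$ first and then $\eps_0\to 0$ gives $\widetilde S_{x_0}^{-1}\ge\bar S_{x_0}^{-q(x_0)}$, completing the proof.

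\textbf{Expected obstacle.} The argument is essentially routine once the correct modular/norm inequality is invoked; the only mild subtlety is keeping track of whether $\|v_\eps\|_{q(x)}$ approaches its limit from above or below $1$, since the exponent on the right hand side of the modular/norm relation switches between $q_-(\eps)$ and $q_+(\eps)$ accordingly. In either case both exponents tend to $q(x_0)$ and the two bounds collapse to $\bar S_{x_0}^{-q(x_0)}$, so no real difficulty arises. A small bookkeeping point is the monotonicity of $\eps\mapsto\sup_{u\in\B(B_\eps(x_0))}\int |u|^{q(x)}\,dx$ (non-decreasing in $\eps$ via extension by zero), which justifies passing to the limit in $\eps_0$.
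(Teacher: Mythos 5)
Your proof is correct and follows essentially the same route as the paper: both arguments hinge on the modular/norm comparison with exponents $q^{\pm}_\eps\to q(x_0)$ together with the identification of $\sup_{\B(B_\eps)}\|u\|_{q(x)}$ with $S(p(\cdot),q(\cdot),B_\eps(x_0))^{-1}$ (which you obtain directly from the homogeneity of the Rayleigh quotient and near-minimizers, while the paper invokes the dual normalization $\tilde\B$). The only difference is organizational — you treat both cases at once via the $\min/\max$ form of the modular inequality, whereas the paper splits into $\widetilde S_{x_0}^{-1}>1$ and $\le 1$ — so no further comment is needed.
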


\begin{proof}
First, suppose that $\widetilde{S}^{-1}_{x_0}>1$. So there exists $\varepsilon_0>0$ such that
$$
\sup_{u\in \B(B_\varepsilon(x_0))}\int_{B_{\varepsilon}(x_0)}|u|^{q(x)}\,dx>1 \qquad 
 \text{ for all } \eps\leq\eps_0. $$
It follows that
\begin{align*}
\sup_{u\in \B(B_\varepsilon(x_0))} \|u\|_{q(x), B_\varepsilon(x_0)}^{q^-_\eps}
\leq\sup_{u\in \B(B_\varepsilon(x_0))}\int_{B_{\varepsilon}(x_0)}|u|^{q(x)}\,dx 
 \le \sup_{u\in \B(B_\varepsilon(x_0))} \|u\|_{q(x), B_\varepsilon(x_0)}^{q^+_\eps},
\end{align*}
where $q^-_\eps = \inf_{B_\eps(x_0)} q(x)$ and $q^+_\eps = \sup_{B_\eps(x_0)} q(x)$. Notice that
$$ \sup_{u\in \B(B_\varepsilon(x_0))}\|u\|_{q(x), B_\varepsilon(x_0)}
 =\left(\inf_{u\in \tilde \B(B_\eps(x_0))}\|\nabla u\|_{p(x), B_\varepsilon(x_0)}\right)^{-1},
$$
where $\tilde \B(U) = \{u\in W^{1,p(x)}_0(U)\colon \|u\|_{q(x), U}\leq 1\}$. So, recalling that
$$ \lim_{\eps\to 0}\inf_{u\in \tilde \B(B_\eps(x_0))}\|\nabla u\|_{p(x), B_\varepsilon(x_0)} = \bar S_{x_0}, $$
we get
$$ \bar S_{x_0}^{-q(x_0)}=\tilde{S}_{x_0}^{-1}. $$
The case where $\widetilde{S}_{x_0}^{-1}\le 1$ is analogous.
\end{proof}

\begin{lema} \label{lema.construccion}
For any $x_0\in\A$ there exists a sequence $(u_\eps,|\nabla u_\eps|^{p(x)})\in \X$ such that
\begin{equation*}
 (u_\eps,|\nabla u_\eps|^{p(x)})\stackrel{\tau}{\to} (0,\delta_{x_0}),
\end{equation*}
and
\begin{equation*}
 \lim_{\eps\to 0}\int |u_\eps|^{q(x)-\eps}\,dx=\bar S_{x_0}^{-q(x_0)}=\tilde S_{x_0}^{-1}.
\end{equation*}
\end{lema}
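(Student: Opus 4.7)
$\textbf{Plan and construction of the $v_n$.}$ My approach is a diagonal argument: first pick almost-extremals $v_n$ of the localized supremum defining $\widetilde S_{x_0}^{-1}$ on shrinking balls $B_{r_n}(x_0)$, and then exploit continuity of $\eps\mapsto\int|v_n|^{q(x)-\eps}\,dx$ at $\eps=0$ (for each fixed $n$) to absorb the perturbation of the exponent. By \eqref{LocSob} and the scaling observation that for $w\in\B(B_r(x_0))$ with $c\coloneqq\|\nabla w\|_{p(x), B_r(x_0)}\in(0,1]$ the rescaled function $w/c$ still lies in $\B(B_r(x_0))$ and satisfies $\int|w/c|^{q(x)}\,dx=\int c^{-q(x)}|w|^{q(x)}\,dx\ge \int|w|^{q(x)}\,dx$, for every $n\in\N$ we can pick $r_n\in(0,1/n)$ and $v_n\in W^{1,p(x)}_0(B_{r_n}(x_0))$ with $\|\nabla v_n\|_{p(x), B_{r_n}(x_0)}=1$ such that $\int_{B_{r_n}(x_0)}|v_n|^{q(x)}\,dx\to\widetilde S_{x_0}^{-1}$. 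Extend each $v_n$ by zero to all of $\Omega$.

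$\textbf{Diagonalization.}$ For each fixed $n$, dominated convergence with integrable majorant $|v_n|^{q(x)}\mathbf{1}_{\{|v_n|\ge 1\}}+\mathbf{1}_{B_{r_n}(x_0)}$ (valid for $\eps\in(0,q_-)$) yields $\int|v_n|^{q(x)-\eps}\,dx\to \int|v_n|^{q(x)}\,dx$ as $\eps\to 0$. Choose a strictly decreasing sequence $\eta_n\downarrow 0$ with
$$\left|\int|v_n|^{q(x)-\eps}\,dx-\int|v_n|^{q(x)}\,dx\right|<\tfrac{1}{n}\quad\text{for all }\eps\in(0,\eta_n],$$
and set $u_\eps\coloneqq v_{n(\eps)}$ where $n(\eps)$ is the unique index with $\eps\in(\eta_{n(\eps)+1},\eta_{n(\eps)}]$. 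Then $\int_\Omega|\nabla u_\eps|^{p(x)}\,dx=1$ and $r_{n(\eps)}\to 0$ as $\eps\to 0$, so $(u_\eps,|\nabla u_\eps|^{p(x)}\,dx)\in\X$.

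$\textbf{Verification of the conclusions.}$ The $\tau$-convergence to $(0,\delta_{x_0})$ splits into: (i) $u_\eps\rightharpoonup 0$ in $L^{q(x)}(\Omega)$, obtained by testing against $\phi\in C_c(\Omega)$ via H\"older, $|\int u_\eps\phi\,dx|\le C\|\phi\|_\infty\|u_\eps\|_{q(x)}\|\mathbf{1}_{B_{r_{n(\eps)}}(x_0)}\|_{q'(x)}$, where $\|u_\eps\|_{q(x)}$ is bounded by Sobolev embedding and the last factor vanishes as $r_{n(\eps)}\to 0$; and (ii) $|\nabla u_\eps|^{p(x)}\,dx\stackrel{*}{\rightharpoonup}\delta_{x_0}$, which follows from unit total mass together with the identity $\int\psi|\nabla u_\eps|^{p(x)}\,dx=\psi(x_0)+\int(\psi-\psi(x_0))|\nabla u_\eps|^{p(x)}\,dx$ whose second term is controlled by the modulus of continuity of $\psi$ at $x_0$ on the shrinking support. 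Finally, $\int|u_\eps|^{q(x)-\eps}\,dx=\int|v_{n(\eps)}|^{q(x)}\,dx+O(1/n(\eps))\to\widetilde S_{x_0}^{-1}$, and Lemma~\ref{SobLoc2} identifies $\widetilde S_{x_0}^{-1}$ with $\bar S_{x_0}^{-q(x_0)}$.

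$\textbf{Main obstacle.}$ The delicate point is the interplay between concentration ($r_n\to 0$) and the exponent shift ($\eps\to 0$). Because $p<n$ there is no uniform $L^\infty$ bound on $\{v_n\}$ from $\|\nabla v_n\|_{p(x)}=1$ alone, so the rate of convergence of $\int|v_n|^{q(x)-\eps}\,dx$ to its limit as $\eps\to 0$ can deteriorate as $n\to\infty$, preventing a naive coupling $\eps=\eps(r_n)$. Choosing $\eta_n$ only after $v_n$ has been fixed circumvents this; the remaining checks are standard measure-theoretic bookkeeping.
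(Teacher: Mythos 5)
Your proof is correct and takes essentially the same route as the paper's: near-extremals of the localized supremum on shrinking balls, dominated convergence in $\eps$ for each fixed function, and a diagonal extraction, with Lemma~\ref{SobLoc2} supplying the identification $\widetilde S_{x_0}^{-1}=\bar S_{x_0}^{-q(x_0)}$. You merely make explicit two points the paper leaves implicit, namely the normalization $\|\nabla v_n\|_{p(x)}=1$ via scaling and the choice of the thresholds $\eta_n$ after the $v_n$ are fixed.
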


\begin{proof}
Let $\delta>0$, there exists $\varepsilon_0$ such that for all $\varepsilon<\varepsilon_0$
$$
|\widetilde{S}_{x_i}^{-1}-\widetilde{S}(p(\cdot),q(\cdot),B_\varepsilon(x_i))^{-1}|<\delta
$$
Moreover, there exists $u_\varepsilon>0$ and $u_\varepsilon\in W_0^{1,p(\cdot)}(B_\varepsilon(x_i))$ such that $\|\nabla u_\varepsilon\|_{p(x)}\leq 1$ and
$$
\widetilde{S}(p(\cdot),q(\cdot),B_\varepsilon(x_i))^{-1}-\delta<\int_{B_\varepsilon(x_i)}|u_\varepsilon|^{q(x)}\,dx=\int_{\Omega}|u_\varepsilon|^{q(x)}\,dx\leq\widetilde{S}(p(\cdot),q(\cdot),B_\varepsilon(x_i))^{-1}
$$
So, we have proved that given $\delta>0$, there exists $u_\delta\in W^{1,p(x)}_0(B_\delta(x_i))$ such that $\|\nabla u_\delta\|_{p(x)} = 1$ and
$$
\left| \widetilde{S}_{x_i}^{-1} - \int_{\Omega}|u_\delta|^{q(x)}\,dx \right| \le \delta
$$
 Observe that the sequence $\{u_\delta\}_{\delta>0}$ verifies
 $$
 u_\delta\to 0\quad \mbox{a.e. in } \Omega,
 $$
and, as
$$
1=\int |\nabla u_\delta|^{p(x)}\,dx;\qquad \text{supp}(|\nabla u_\delta|)\subset B_\delta(x_i),
$$
then $|\nabla u_\delta|^{p(x)}\rightharpoonup\delta_{x_i}$, weakly in the sense of measures, as $\delta\to 0$.

Now, just observe that, by the Lebesgue dominated convergence theorem,
$$
\int_\Omega |u_\delta|^{q(x)-\eps}\, dx \to \int_{\Omega} |u_\delta|^{q(x)}\, dx.
$$

From these facts, the conclusion of the Lemma, follows.
\end{proof}

We can now prove Proposition \ref{atomica}:

\begin{proof}[Proof of Proposition \ref{atomica}]
We prove the claim in the case $k=2$ i.e. for $\mu$ of the form $\mu=\mu_0\delta_{x_0}+\mu_1\delta_{x_1}$ with $x_0,x_1\in\A$ and
$\mu_0,\mu_1>0$, $\mu(\bar\Omega)=\mu_0+\mu_1< 1$.
We denote by $u_{0,\eps}$ and $u_{1,\eps}$ the functions given by the previous proposition corresponding to the points $x_0$ and $x_1$: 
\begin{equation}\label{E}
\begin{split} 
 & (u_{0,\eps},|\nabla u_{0,\eps}|^{p(x)})\stackrel{\tau}{\to} (0,\delta_{x_0}), \qquad 
 \lim_{\eps\to 0}\int |u_{0,\eps}|^{q(x)-\eps}\,dx=\bar S_{x_0}^{-q(x_0)},\\
 & (u_{1,\eps},|\nabla u_{1,\eps}|^{p(x)})\stackrel{\tau}{\to} (0,\delta_{x_1}), \qquad 
 \lim_{\eps\to 0}\int |u_{1,\eps}|^{q(x)-\eps}\,dx=\bar S_{x_1}^{-q(x_1)}.
\end{split} 
\end{equation}

Since $x_0\neq x_1$, the supports of $u_{0,\eps}$ and $u_{1,\eps}$ are disjoint for $\eps$ small.
It follows that the functions 
$$ u_\varepsilon\coloneqq \mu_0^\frac{1}{p(x_0)}u_{\varepsilon,0}+\mu_1^\frac{1}{p(x_1)}u_{\varepsilon,1} $$
satisfy for any given $\psi\in C(\bar\Omega)$ that 
\begin{align*}
\int_\Omega \psi |\nabla u_\varepsilon|^{p(x)}\,dx
 &=\int \mu_0^{\frac{p(x)}{p(x_0)}}\psi |\nabla u_{\varepsilon,0}|^{p(x)}\,dx 
   + \int \mu_1^{\frac{p(x)}{p(x_1)}}\psi |\nabla u_{\varepsilon,1}|^{p(x)}\,dx\\
 & \to \mu_0\psi(x_0)+\mu_1\psi(x_1) = \int\psi\,d\mu. 
\end{align*}
in view of (\ref{E}). In particular $\lim_{\eps\to 0}\int_\Omega  |\nabla u_\varepsilon|^{p(x)}\,dx=\mu_0+\mu_1<1$. 
Hence $(u_\eps,|\nabla u_\eps|^{p(x)})$ belongs to $\X$ for $\eps$ small and converges to $(0,\mu)$ as $\eps\to 0$. 

Moreover 
\begin{align*}
 F_\eps(u_\varepsilon,|\nabla u_\varepsilon|^{p(x)})
&=\int_\Omega|u_\varepsilon|^{q(x)-\varepsilon}\,dx
=\int \mu_0^\frac{q(x)-\varepsilon}{p(x_0)} |u_\varepsilon|^{q(x)-\varepsilon}\,dx
  +\int \mu_1^\frac{q(x)-\varepsilon}{p(x_1)} |u_\varepsilon|^{q(x)-\varepsilon}\,dx \\
&= (1+o(1))\int \mu_0^\frac{q(x)}{p(x_0)} |u_\varepsilon|^{q(x)-\varepsilon}\,dx
  +(1+o(1))\int \mu_1^\frac{q(x)}{p(x_1)} |u_\varepsilon|^{q(x)-\varepsilon}\,dx \\  
& = \mu_0^\frac{q(x_0)}{p(x_0)}\bar S_{x_0}^{-q(x_0)} +\mu_1^\frac{q(x_1)}{p(x_1)}\bar S_{x_1}^{-q(x_1)} + o(1) \\
& =F^*(0,\mu_0\delta_{x_0}+\mu_1\delta_{x_1})+ o(1).
\end{align*}
This finishes the proof of Proposition \ref{atomica} in the case $k=2$. 
The proof when $\mu$ has an arbitrary finite number of atoms is similar.
\end{proof}

The next lemma first proved in \cite{Amar-Garroni} allows to deduce the general case from the two particular cases stated in propositions \ref{no atomica} and \ref{atomica}. Since its proof is identical to that of \cite{Amar-Garroni} and \cite{Palatucci} we omit it. Its statement involves the functional $F^-$ defined in \eqref{defF-}. 

\begin{lema}\label{lema.ppal}
If $F^-(u,\mu)\geq F^*(u,\mu)$ for every $(u,\mu)\in X$ such that
\begin{enumerate}
\item $\mu(\overline{\Omega})<1 $,
\item $\mu=|\nabla u|^{p(x)}+\widetilde{\mu}+\sum_{i=0}^n\mu_i\delta_{x_i}$,
\item $\d(\overline{\text{supp}(|u|+\tilde\mu)}, \bigcup_{i=1}^n \{x_i\}) >0$, 
\end{enumerate}
then $F^-(u,\mu)\geq F^*(u,\mu)$ for every $(u,\mu)\in X$.
\end{lema}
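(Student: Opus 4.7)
The strategy is the standard approximation-plus-diagonal argument: reduce the general case to the class of elements of $\X$ satisfying (1)-(3) by approximating an arbitrary $(u,\mu)\in\X$ with a sequence $(u_k,\mu_k)$ of such ``nice'' elements, and then glue the recovery sequences provided by the hypothesis into a single one by diagonalization.

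The first step is to construct, for any $(u,\mu)\in\X$ with $d\mu=|\nabla u|^{p(x)}\,dx+d\tilde\mu+\sum_{i\in I}\mu_i\delta_{x_i}$, a sequence $(u_k,\mu_k)\in\X$ satisfying (1)-(3), with $(u_k,\mu_k)\stackrel{\tau}{\to}(u,\mu)$ and $F^*(u_k,\mu_k)\to F^*(u,\mu)$. This proceeds in three stages. First I truncate the atomic part: enumerate the atoms so that the $\mu_i$ are non-increasing and keep only the first $k$ of them. Since $\mu(\overline\Omega)\le 1$ forces $\mu_i\to 0$, and each $\bar S_{x_i}$ is bounded below by $S(p(\cdot),q(\cdot),\Omega)>0$, the discarded sum $\sum_{i>k}\mu_i^{p^*(x_i)/p(x_i)}\bar S_{x_i}^{-p^*(x_i)}$ tends to zero. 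Second I enforce the support-separation condition (3): pick a smooth cutoff $\phi_k$ vanishing on $\bigcup_{i\le k}B_{r_k}(x_i)$ and identically $1$ off $\bigcup_{i\le k}B_{2r_k}(x_i)$, with $r_k\to 0$, and set $u_k=\phi_k u$, taking $\tilde\mu_k$ to be the restriction of $\tilde\mu$ to $\{\phi_k=1\}$. Third I enforce the strict inequality (1) by scaling the resulting data by a factor $1-\delta_k\to 1$, so that the decomposition $d\mu_k=|\nabla u_k|^{p(x)}\,dx+d\tilde\mu_k+\sum_{i\le k}\mu_i\delta_{x_i}$ satisfies $\mu_k(\overline\Omega)<1$. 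The convergence $(u_k,\mu_k)\stackrel{\tau}{\to}(u,\mu)$ then follows from $u_k\to u$ in $L^{q(x)}(\Omega)$ (by dominated convergence, since $\phi_k\to 1$ a.e.) and from the weak-$*$ convergence of each piece of $\mu_k$ to the corresponding piece of $\mu$, while $F^*(u_k,\mu_k)\to F^*(u,\mu)$ follows by inspecting the explicit formula for $F^*$.

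For the diagonal step I apply the hypothesis to each $(u_k,\mu_k)$ to obtain a sequence $\{(u_{k,\eps},\mu_{k,\eps})\}_{\eps>0}\subset\X$ with $(u_{k,\eps},\mu_{k,\eps})\stackrel{\tau}{\to}(u_k,\mu_k)$ as $\eps\to 0$ and $\liminf_{\eps\to 0}F_\eps(u_{k,\eps},\mu_{k,\eps})\ge F^*(u_k,\mu_k)-1/k$. Using that $\tau$ is metrizable on bounded subsets of $\X$, I would then choose $\eps_k\to 0$ so that $(u_{k,\eps_k},\mu_{k,\eps_k})$ is $1/k$-close to $(u_k,\mu_k)$ and $F_{\eps_k}(u_{k,\eps_k},\mu_{k,\eps_k})\ge F^*(u_k,\mu_k)-2/k$. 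The diagonal sequence $(u_{k,\eps_k},\mu_{k,\eps_k})$ then $\tau$-converges to $(u,\mu)$ and satisfies $\liminf_k F_{\eps_k}(u_{k,\eps_k},\mu_{k,\eps_k})\ge F^*(u,\mu)$, which gives $F^-(u,\mu)\ge F^*(u,\mu)$ by the very definition \eqref{defF-}.

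The main obstacle will be the second stage of the construction. Multiplying $u$ by $\phi_k$ introduces a spurious gradient contribution $u\nabla\phi_k$ supported on annular transition regions where $|\nabla\phi_k|$ is of order $1/r_k$; this could inflate $\int|\nabla u_k|^{p(x)}\,dx$ above $\int|\nabla u|^{p(x)}\,dx$ and spoil the weak-$*$ convergence of $|\nabla u_k|^{p(x)}\,dx$ to $|\nabla u|^{p(x)}\,dx$. Controlling this requires choosing $r_k$ according to the local behavior of $\mu$ near each $x_i$, exploiting the absolute continuity of $\int|\nabla u|^{p(x)}\,dx$ so that the modular mass of $\nabla u$ on the transition annuli is itself negligible, and using Proposition \ref{desigualdad.nico} to dominate the cross terms arising in $|\nabla(\phi_k u)|^{p(x)}-\phi_k^{p(x)}|\nabla u|^{p(x)}$. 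Once this technicality is taken care of, the remainder of the argument is formal and follows the lines of \cite{Amar-Garroni, Palatucci}.
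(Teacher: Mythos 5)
Your overall architecture is the right one, and it is exactly the route of the proof the paper points to (the paper omits the argument, referring to \cite{Amar-Garroni} and \cite{Palatucci}): approximate a general $(u,\mu)\in\X$ by elements satisfying (1)--(3) in such a way that $F^*$ passes to the limit, then diagonalize, using that $\tau$ is metrizable on $\X$ (which holds because $\X$ is bounded in $W^{1,p(x)}_0(\Omega)$ and in $\M(\overline\Omega)$, and $C(\overline\Omega)$ is separable). The truncation of the atomic tail (with the bound $\mu_i^{p^*(x_i)/p(x_i)}\le\mu_i$ and $\bar S_{x_i}\ge S(p(\cdot),q(\cdot),\Omega)>0$ from \eqref{Sp<S}), the $(1-\delta_k)$ rescaling to enforce $\mu_k(\overline\Omega)<1$, and the diagonal extraction are all sound as written.

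The gap is precisely in the step you flag as the main obstacle: the tools you list do not close it. The dangerous term in $\int_\Omega|\nabla(\phi_k u)|^{p(x)}\,dx$ is $\int_{A_k}|u|^{p(x)}|\nabla\phi_k|^{p(x)}\,dx\sim\int_{A_k}\left(|u|/r_k\right)^{p(x)}dx$, where $A_k$ is the union of the transition annuli $B_{2r_k}(x_i)\setminus B_{r_k}(x_i)$. Absolute continuity of $|\nabla u|^{p(x)}\,dx$ controls only $\int_{A_k}|\nabla u|^{p(x)}\,dx$; Proposition \ref{desigualdad.nico} merely reduces matters to the pure terms plus cross terms; and choosing $r_k$ according to the local behaviour of $\mu$ says nothing about the size of $u$ itself on $A_k$. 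What is actually needed is that singletons have zero Sobolev capacity, which here comes from the criticality of $p^*$: since $u\in L^{p^*(x)}(\Omega)$ by the Sobolev embedding, H\"older's inequality with exponent $p^*(x)/p(x)$ gives, in the constant-exponent model,
$\int_{A_r}\left(|u|/r\right)^{p}dx\le r^{-p}\big(\int_{A_r}|u|^{p^*}dx\big)^{p/p^*}|A_r|^{1-p/p^*}\le C\big(\int_{A_r}|u|^{p^*}dx\big)^{p/p^*}\to0$,
because $1-p/p^*=p/n$ so that $r^{-p}\,|A_r|^{p/n}$ is bounded --- an exact cancellation that fails for any subcritical exponent; in the variable exponent setting one additionally needs the log-H\"older condition to control the oscillation of $p$ and $p^*$ over $A_r$ (equivalently, one may invoke a Hardy-type inequality). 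Without this estimate the cutoff could increase the gradient modular by an amount of order one, destroying both the membership $(u_k,\mu_k)\in\X$ and the convergence $\mu_k\stackrel{*}{\rightharpoonup}\mu$. Once this estimate is supplied, your plan goes through and coincides with the argument of \cite{Amar-Garroni} and \cite{Palatucci}.
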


Finally, we can prove  the principal result.

\begin{proof}[Proof of the $\liminf$ inequality]
We only have to check the hypotheses of Lemma \ref{lema.ppal}.
Given some $(u,\mu)\in\X$ as in Lemma \ref{lema.ppal}, we can descompose $\mu$ as $\mu=\mu^1+\mu^2$ with $\mu^1=\sum_{i=0}^n\mu_i\delta_{x_i}$ and $\mu^2=|\nabla u|^{p(x)}+\widetilde{\mu}$. Moreover there exists relatively open subsets $A, B\subset \overline{\Omega}$ such that $\overline{\text{supp}(\mu^1)}\subset\overline{A}$ and $\overline{\text{supp}(|u|+\tilde\mu)}\subset\overline{B}$ and $\overline{A}\cap\overline{B}=\emptyset$.

By propositions \ref{Exist}, \ref{no atomica} and  \ref{atomica}, there exist sequences 
$(u_\varepsilon^1,\mu_\varepsilon^1=|\nabla u_\varepsilon^1|^{p(x)})\in \X$ and 
$(u_\varepsilon^2,\mu_\varepsilon^2=|\nabla u_\varepsilon^2|^{p(x)})\in \X$ with
$u^1_\varepsilon\in W_0^{1,p(x)}(A)$, $u^2_\varepsilon\in W_0^{1,p(x)}(B)$ 
converging in $\X$ to $(0,\mu^1)$ and $(u,\mu^2)$ respectively, and satisfying 
$$  F_\varepsilon(u_\varepsilon^1,\mu_\varepsilon^1)\to F^*(0,\mu^1), \quad \text{ and } 
     F_\varepsilon(u_\varepsilon^2,\mu_\varepsilon^1)\to F^*(0,\mu^2). $$
Consider $u_\varepsilon=u_\varepsilon^1+u_\varepsilon^2$ and $\mu_\varepsilon=\mu_\varepsilon^1+\mu_\varepsilon^2$. 
As $u_\eps^1$ and $u_\eps^2$ have disjoint support, it is easily seen as in the proof of prop.  \ref{atomica}, that $(u_\eps,\mu_\eps)$ belongs to $\X$ and converges to $(u,\mu)$. Moreover 
\begin{align*}
F_\eps(u_\varepsilon,\mu_\varepsilon)&= F_\eps(u_\eps^1, \mu_\eps^1) + F_\eps(u_\eps^2, \mu_\eps^2)\\
&= F^*(0,\mu^1)+F^*(u,\mu^2) + o(1)\\
&=\int_\Omega |u|^{q(x)}\,dx+\sum^n_{i=0}\mu_i^{\frac{p^*(x_i)}{p(x_i)}}\overline{S}_{x_i}^{-p^*(x_i)} + o(1)\\
&=F^*(u,\mu) + o(1).
\end{align*}
This finishes the proof.
\end{proof}

\section{Proof of proposition \ref{SeS} and theorem \ref{u_eps}.}

Before beginning with the proof of proposition \ref{SeS}, we state and prove an easy version of the H\"older inequality, that even it is well known (see e.g. \cite{Diening}) is not the most common version. So we provide here with a proof for the sake of completeness.

\begin{lema}\label{Holder}
Let $f\in L^{p(x)}(\Omega)$ and $g\in L^{p'(x)}(\Omega)$ where $1<p_-\le p(x)\le p_+<\infty$ and $p'(x)=\frac{p(x)}{p(x)-1}$ is conjugate exponent. Then 
\begin{equation}\label{HolderIneq}
 \int_\Omega f(x)g(x)\, dx \le \left(\frac{1}{p_-} + \frac{1}{p'_-}\right) \max\left\{\left(\int_\Omega |f(x)|^{p(x)}\, dx\right)^{1/p_-}; \left(\int_\Omega |f(x)|^{p(x)}\, dx\right)^{1/p_+}\right\} \|g\|_{p'(x)}.
\end{equation} 
\end{lema}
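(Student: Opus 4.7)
The plan is to derive the inequality from the pointwise Young inequality together with the Luxemburg norm definition of $\|g\|_{p'(x)}$. The idea is that for fixed $x\in\Omega$, since $p(x)$ and $p'(x)$ are conjugate exponents, one has
$$ab\le \frac{a^{p(x)}}{p(x)}+\frac{b^{p'(x)}}{p'(x)}\qquad (a,b\ge 0),$$
and this pointwise inequality, after an appropriate normalization, produces the desired bound.

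Set $I\coloneqq \int_\Omega|f(x)|^{p(x)}\,dx$ and
$$A\coloneqq \max\bigl\{I^{1/p_-},\,I^{1/p_+}\bigr\},$$
which is exactly the max appearing in the statement. The first step is to verify the key elementary fact that $A^{p(x)}\ge I$ for every $x\in\Omega$. This splits into two cases: if $I\ge 1$ then $A=I^{1/p_-}\ge 1$, so $A^{p(x)}\ge A^{p_-}=I$; if $I\le 1$ then $A=I^{1/p_+}\le 1$, so $A^{p(x)}\ge A^{p_+}=I$. Consequently,
$$\int_\Omega \Bigl(\frac{|f(x)|}{A}\Bigr)^{p(x)}\,dx\le \frac{1}{I}\int_\Omega|f(x)|^{p(x)}\,dx=1.$$

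Next, write $B\coloneqq \|g\|_{p'(x)}$. By definition of the Luxemburg norm, $\int_\Omega(|g|/B)^{p'(x)}\,dx\le 1$. Apply Young's inequality to $a=|f(x)|/A$ and $b=|g(x)|/B$ and integrate:
$$\int_\Omega \frac{|f(x)|}{A}\cdot\frac{|g(x)|}{B}\,dx\le \int_\Omega \frac{1}{p(x)}\Bigl(\frac{|f|}{A}\Bigr)^{p(x)}dx+\int_\Omega \frac{1}{p'(x)}\Bigl(\frac{|g|}{B}\Bigr)^{p'(x)}dx.$$
Bound each term by pulling out the constants $1/p_-$ and $1/p'_-$ respectively (using $p(x)\ge p_-$ and $p'(x)\ge p'_-$), and use the two modular bounds above to dominate each remaining integral by $1$. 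This gives
$$\frac{1}{AB}\int_\Omega |f(x)g(x)|\,dx\le \frac{1}{p_-}+\frac{1}{p'_-},$$
which, after multiplying through by $AB$, is precisely \eqref{HolderIneq}.

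There is really no significant obstacle: the only non-routine observation is the case analysis showing $A^{p(x)}\ge I$, which is where the $\max$ between the $1/p_-$ and $1/p_+$ powers comes in. Everything else is Young's inequality plus the definition of the Luxemburg norm.
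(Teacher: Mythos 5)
Your proof is correct and follows essentially the same route as the paper's: integrate the pointwise Young inequality for the normalized functions, pull out the constants $1/p_-$ and $1/p'_-$, and control the $f$-factor by $\max\{I^{1/p_-},I^{1/p_+}\}$. The only (cosmetic) difference is that the paper normalizes $f$ by its Luxemburg norm $\lambda=\|f\|_{p(x)}$ and then bounds $\lambda\le A$ at the end, whereas you normalize by $A$ directly and verify the modular bound $\int_\Omega(|f|/A)^{p(x)}\,dx\le 1$ via the case analysis $I\ge 1$ versus $I\le 1$ — the same estimate in a slightly different order.
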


\begin{proof}
Let $\lambda=\|f\|_{p(x)}$ and $\mu=\|g\|_{p'(x)}$. By Young's inequality, we get
\begin{align*}
\int_\Omega \frac{f(x)}{\lambda}\frac{g(x)}{\mu}\, dx &\le \int_\Omega \frac{1}{p(x)}\left(\frac{|f(x)|}{\lambda}\right)^{p(x)}\, dx + \int_\Omega \frac{1}{p'(x)}\left(\frac{|g(x)|}{\mu}\right)^{p'(x)}\, dx\\
&\le \frac{1}{p_-}\int_\Omega \left(\frac{|f(x)|}{\lambda}\right)^{p(x)}\, dx + \frac{1}{p'_-}\int_\Omega \left(\frac{|g(x)|}{\mu}\right)^{p'(x)}\, dx\\
&= \frac{1}{p_-} + \frac{1}{p'_-}
\end{align*}

Now, the result follows just observing that
$$ \lambda = \|f\|_{p(x)} 
  \le  \max\left\{\left(\int_\Omega |f(x)|^{p(x)}\, dx\right)^{1/p_-}; \left(\int_\Omega |f(x)|^{p(x)}\, dx\right)^{1/p_+}\right\}.  $$
\end{proof}

We are now ready to prove Proposition \ref{SeS}.

\begin{proof}[Proof of Proposition \ref{SeS}]
Using H\"older inequality \eqref{HolderIneq}, we have for any $u\in \B(\Omega)$ that 
\begin{align*}
\int_\Omega |u|^{q(x)-\eps}\, dx &\le \left(\frac{1}{\left(\frac{q}{q-\eps}\right)_-} + \frac{1}{\left(\frac{q}{\eps}\right)_-}\right) \left(\int_\Omega |u|^{q(x)}\,dx\right)^{\frac{1}{\left(\frac{q}{q-\eps}\right)_-}} \|1\|_{\left(\frac{q(x)}{\eps}\right)'} \\
&= (1+o(1)) \Big(  \int_\Omega |u|^{q(x)}\,dx \Big)^{1+o(1)}\\
\end{align*}
from which we deduce  that $\limsup_{\eps\to 0} \tilde S^{-1}_\eps\le \tilde S^{-1}$.

For the opposite inequality, we first observe that for any $u\in W^{1,p(x)}_0(\Omega)$, 
$$ \lim_{\eps\to 0} \int_\Omega |u|^{q(x)-\eps}\,dx = \int_\Omega |u|^{q(x)}\,dx. $$ 
In fact, if $u\in W^{1,p(x)}_0(\Omega)$, we can write 
$$ |u|^{q(x)-\eps} = |u|^{q(x)-\eps} 1_{\{|u|\le 1\}} + |u|^{q(x)-\eps} 1_{\{|u|> 1\}}  \le 1 + |u|^{q(x)}, $$
and the result follows by the Dominated Convergence Theorem. 
Now, for a given $\delta>0$, consider $u_\delta\in \B(\Omega)$ such that  
$\int_\Omega |u_\delta|^{q(x)}\ge \tilde S^{-1}-\delta$. Then 
$$ \liminf_{\eps\to 0} \tilde S^{-1}_\eps \ge \liminf_{\eps\to 0} \int_\Omega |u_\delta|^{q(x)-\eps}  
 = \int_\Omega |u_\delta|^{q(x)}\,dx \ge \tilde S^{-1}-\delta. $$
The proof is now complete.
\end{proof}

Before proving Theorem \ref{u_eps} we need the following Sobolev  type inequality deduced from the definition of $\tilde S$: 

\begin{prop}\label{SobIn}
  For any $u\in W^{1,p(x)}_0(\Omega)$, 
 \begin{equation}\label{SobInequ} 
  \int_\Omega |u|^{q(x)}\,dx \le \tilde S^{-1} \max\,\Big\{\|\nabla u\|_p^{q^+},\|\nabla u\|_p^{q^-}\Big\}. 
 \end{equation}
\end{prop}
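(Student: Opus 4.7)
The plan is to reduce the inequality to the defining supremum of $\tilde S^{-1}$ by a simple normalization. Given $u\in W^{1,p(x)}_0(\Omega)\setminus\{0\}$, I would set $\lambda\coloneqq \|\nabla u\|_{p(x)}$ and $v\coloneqq u/\lambda$. Then $\|\nabla v\|_{p(x)}=1$, so $v\in\B(\Omega)$ and, by the very definition \eqref{defTildeS} of $\tilde S$,
\[ \int_\Omega |v|^{q(x)}\,dx \le \tilde S^{-1}. \]

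Undoing the rescaling gives
\[ \int_\Omega |u|^{q(x)}\,dx = \int_\Omega \lambda^{q(x)}|v|^{q(x)}\,dx. \]
The only subtlety is that $\lambda$ is raised to the variable exponent $q(x)$, and it must be pulled out of the integral. I would split into two cases according to whether $\lambda\ge 1$ or $\lambda\le 1$. In the first case, $\lambda^{q(x)}\le \lambda^{q^+}$ uniformly in $x\in\Omega$, giving
\[ \int_\Omega |u|^{q(x)}\,dx \le \lambda^{q^+}\int_\Omega |v|^{q(x)}\,dx \le \tilde S^{-1}\|\nabla u\|_{p(x)}^{q^+}. \]
In the second case, $\lambda^{q(x)}\le \lambda^{q^-}$, leading to the analogous bound with exponent $q^-$. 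Taking the maximum of the two right-hand sides in both cases yields \eqref{SobInequ}.

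There is no real obstacle here: this is a straightforward two-case normalization argument that exploits the scale-invariance of $\|\nabla\cdot\|_{p(x)}$ together with the fact that the Luxemburg norm and the modular are related by $a\mapsto a^{q^\pm}$ according to the sign of $a-1$. The only point worth being careful about is the variable exponent on $\lambda$, which is precisely what forces the $\max$ on the right-hand side; once this is isolated, applying the definition of $\tilde S^{-1}$ to the normalized function $v$ concludes the argument.
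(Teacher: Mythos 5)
Your proof is correct and is essentially the paper's own argument: normalize $v=u/\|\nabla u\|_{p(x)}$ so that $v\in\B(\Omega)$, apply the definition of $\tilde S^{-1}$, and then bound $\|\nabla u\|_{p(x)}^{q(x)}$ pointwise by $\max\{\|\nabla u\|_{p(x)}^{q^+},\|\nabla u\|_{p(x)}^{q^-}\}$ (your two-case split on whether $\|\nabla u\|_{p(x)}\gtrless 1$ is exactly this observation). No gaps.
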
 

\begin{proof} 
Let $u\in W^{1,p(x)}_0(\Omega)$. By definition of the norm $\|\cdot\|_{p(x)}$, 
there holds 
$$   \int_\Omega \Big(\frac{|\nabla u|}{\|\nabla u\|_p}\Big)^{p(x)}\,dx =1.$$
It follows that $v\coloneqq \frac{u}{\|\nabla u\|_p}$ is admissible for $\tilde S^{-1}$ so that 
$$ \int_\Omega \frac{|u|^{q(x)}}{\|\nabla u\|_p^{q(x)}}\,dx \le \tilde S^{-1}. $$
The result follows noticing that $\|\nabla u\|_p^{q(x)}\le \max\,\Big\{\|\nabla u\|_p^{q^+},\|\nabla u\|_p^{q^-}\Big\}$ for a.e. $x\in\Omega$.
\end{proof}

\begin{proof}[Proof of theorem \ref{u_eps}]

Observe that as an immediate consequence of the $\Gamma$-convergence of $F_\eps$ to $F^*$ as stated in theorem \ref{TEO}, we have 
\begin{equation}\label{Estim}
 \lim_{\eps\to 0}\sup_{\X} F_\eps = \sup_{\X} F^*. 
\end{equation} 
Noticing that $\tilde  S_\eps^{-1} = \sup_{\X} F_\eps $, we obtain, in view of the previous proposition, that 
\begin{equation}\label{Estim2}
  \sup_{\X} F^*  = \tilde S^{-1}. 
\end{equation} 

Being subcritical, the embedding $W^{1,p(x)}_0(\Omega)\hookrightarrow L^{q(x)-\eps}(\Omega)$ is compact for any $\eps>0$. 
It follows that there exist an extremal $u_\eps\in\B(\Omega)$ for $\tilde S_\eps^{-1}$ i.e. 
\begin{equation}\label{Extremal}
 \int_\Omega|u_\varepsilon|^{q(x)-\varepsilon}\,dx=\tilde S_\varepsilon^{-1}.  
\end{equation} 
We can assume that $u_\eps\rightharpoonup u$ weakly in $W^{1,p(x)}_0(\Omega)$. 
The $\limsup$ property (\ref{Limsup}) then gives 
$$ 
\limsup_{\varepsilon\to 0}\int_\Omega|u_\varepsilon|^{q(x)-\varepsilon}\,dx  \le F^*(u, \mu)
= \int_\Omega |u|^{q(x)}\,dx+\sum_{i\in I}\mu_i^{\frac{p^*(x_i)}{p(x_i)}}\widetilde{S}_{x_i}^{-1}, 
$$ 
where $\mu$ and the $\mu_i$ are as in the CCP \eqref{CCP}--\eqref{CCP3}. We then obtain in view of \eqref{Estim2}, \eqref{Extremal} and Proposition \ref{SeS} that $(u,\mu)$ is an extremal for $F^*$ i.e.
\begin{equation}\label{E10}
  \int_\Omega |u|^{q(x)}\,dx+\sum_{i\in I}\mu_i^{\frac{p^*(x_i)}{p(x_i)}}\widetilde{S}_{x_i}^{-1}=\tilde S^{-1}.
\end{equation} 

Since $\mathcal{B}(B_\eps(x_0))\subset\mathcal{B}(\Omega)$ for any $x_0\in\bar\Omega$, we see that $\tilde S_{x_0}^{-1}\le \tilde S^{-1}$ for any $x_0\in\A$. Using also the Sobolev inequality (\ref{SobInequ}), we deduce from (\ref{E10}) that 
$$   1  \le   \max\,\Big\{\|\nabla u\|_p^{q^+},\|\nabla u\|_p^{q^-}\Big\}  +\sum_{i\in I}\mu_i^{\frac{p^*(x_i)}{p(x_i)}}.$$  
Moreover since 
\begin{equation}\label{massmu}
  1\ge \mu(\bar\Omega) \ge \int_\Omega |\nabla u|^{p(x)}\,dx + \sum_i\mu_i, 
\end{equation} 
we have 
$$  \max\,\Big\{\|\nabla u_0\|_p^{q^+},\|\nabla u_0\|_p^{q^-}\Big\}  
     = \|\nabla u_0\|_p^{q^-}  
     \le \Big( \int_\Omega |\nabla u_0|^{p(x)}\,dx\Big)^\frac{1}{p^+}.$$ 
It follows that 
$$   1\le   \Big( \int_\Omega |\nabla u_0|^{p(x)}\,dx\Big)^\frac{q^-}{p^+} + \sum_{i\in I} \mu_i^\frac{p^*(x_i)}{p(x_i)}. $$
Since $\frac{q^-}{p^+},\,\frac{p^*(x_i)}{p(x_i)}>1$ for any $i$,  we obtain in view of (\ref{massmu}) that 
$$   1  \le \int_\Omega |\nabla u|^{p(x)}\,dx + \sum_i \mu_i \le 1 $$
where the first  inequality is strict, leading to a contradiction, if one of the terms in the sum belongs to $(0,1)$. 
It follows that 
\begin{enumerate} 
\item[(i)] either $\int_\Omega |\nabla u|^{p(x)}\,dx =0$ and all the $\mu_i$ are $0$ except one $\mu_{i_0}=1$, 
\item[(ii)] or $\mu_i=0$ for any $i\in I$ and $\int_\Omega |\nabla u|^{p(x)}\,dx =1$. 
\end{enumerate} 

In the first case (i), the CCP (\ref{CCP})-(\ref{CCP3}) reduces to 
$$ |u_\eps|^{q(x)}\, dx \stackrel{*}{\rightharpoonup} \nu_{i_0}\,  \delta_{x_{i_0}}, \qquad 
|\nabla u_\eps|^{p(x)}\, dx \stackrel{*}{\rightharpoonup}  \delta_{x_{i_0}}, \qquad 
  \nu_{i_0} \le \tilde S_{x_{i_0}}^{-1}. 
$$
Then using H\"older inequality, 
$$ \tilde S^{-1} = \lim_{\eps\to 0} \int_\Omega |u_\eps|^{q(x)-\eps}\,dx 
                       \le \limsup_{\eps\to 0} \int_\Omega |u_\eps|^{q(x)}\,dx 
                       = \nu_{i_0}\le \tilde S_{x_{i_0}}^{-1}\le\tilde S^{-1}. $$ 
It follows that  $\nu_{i_0} = \tilde S^{-1}$ and we obtain the second alternative in theorem \ref{u_eps}. 

In the second case (ii), it follows from (\ref{E10}) that $u$ is an extremal for $\tilde S^{-1}$. 
Since $u_\eps\to u$ a.e. and $\int_\Omega |u_\eps|^{q(x)}\,dx\to \int_\Omega |u|^{q(x)}\,dx$,  we obtain using the Brezis-Lieb Lemma (see \cite{BL} and also \cite[Lemma 3.4]{FBS}) that 
$$
\int_\Omega |u_\eps-u|^{q(x)}\,dx =\int_\Omega |u_\eps|^{q(x)}\,dx - \int_\Omega |u|^{q(x)}\,dx + o(1)= o(1) 
$$
i.e. $u_\eps\to u$ strongly in $L^{q(x)}(\Omega)$. This ends the proof of Theorem  \ref{u_eps}. 
\end{proof}

%

\section*{Acknowledgements}
This work was partially supported by Universidad de Buenos Aires under grant UBACYT 20020100100400 and by CONICET (Argentina) PIP 5478/1438.

\def\ocirc#1{\ifmmode\setbox0=\hbox{$#1$}\dimen0=\ht0 \advance\dimen0
  by1pt\rlap{\hbox to\wd0{\hss\raise\dimen0
  \hbox{\hskip.2em$\scriptscriptstyle\circ$}\hss}}#1\else {\accent"17 #1}\fi}
  \def\ocirc#1{\ifmmode\setbox0=\hbox{$#1$}\dimen0=\ht0 \advance\dimen0
  by1pt\rlap{\hbox to\wd0{\hss\raise\dimen0
  \hbox{\hskip.2em$\scriptscriptstyle\circ$}\hss}}#1\else {\accent"17 #1}\fi}
\providecommand{\bysame}{\leavevmode\hbox to3em{\hrulefill}\thinspace}
\providecommand{\MR}{\relax\ifhmode\unskip\space\fi MR }
\providecommand{\MRhref}[2]{%
  \href{http://www.ams.org/mathscinet-getitem?mr=#1}{#2}
}
\providecommand{\href}[2]{#2}
\bibliographystyle{plain}
\bibliography{biblio}

\end{document}